\documentclass{l4dc2026}
\usepackage{soul}
\usepackage{mathtools}
\usepackage{mathrsfs,dsfont}
\usepackage{enumerate}
\usepackage{bookmark}
\usepackage{tikz}
\usetikzlibrary{arrows.meta,calc,backgrounds}
\usepackage{caption}
\captionsetup{font=small,labelfont=bf}

\newcommand{\R}{{\mathbb{R}}}
\newcommand{\Z}{{\mathbb{Z}}}
\newcommand{\cF}{\mathcal{F}}

\newcommand{\cR}{\mathcal{R}}
\newcommand{\cX}{\mathcal{X}}
\newcommand{\sA}{\mathsf{A}}

\newcommand{\sP}{\mathsf{P}}

\newcommand{\mvmap}{{\rightrightarrows}}
\newcommand{\setof}[1]{\left\{ {#1}\right\}}
\newcommand{\setdef}[2]{\left\{ #1 \, \middle| \, #2 \right\}}
\newcommand{\Int}{\mathop{\mathrm{int}}\nolimits}
\newcommand{\cl}{\mathop{\mathrm{cl}}\nolimits}
\newcommand{\Inv}{\mathop{\mathrm{Inv}}\nolimits}
\newcommand{\diam}{\mathop{\mathrm{diam}}\nolimits}
\newcommand{\MG}{\mathop{\mathsf{MG}}\nolimits}
\newcommand{\RoA}{\mathop{\mathrm{RoA}}\nolimits}

\newtheorem{thm}{Theorem}[section]

\newtheorem{prop}[thm]{Proposition}

\newtheorem{defn}[thm]{Definition}

\newtheorem{rem}[thm]{Remark}

\title[Topological Dynamics via Learned Hybrid Systems]{Topological Dynamics via Learned Hybrid Systems}

\usepackage{times}
\author{
  \Name{Bernardo Rivas} \Email{bernardo.dopradorivas@utoledo.edu}\\
  \Name{William Kalies} \Email{william.kalies@utoledo.edu}\\
  \addr University of Toledo, Toledo, OH
  \AND
  \Name{Kaito Iwasaki} \Email{kaitoi@umich.edu}\\
  \Name{Anthony Bloch} \Email{abloch@umich.edu}\\
  \Name{Maani Ghaffari} \Email{maanigj@umich.edu}\\
  \addr University of Michigan, Ann Arbor, MI
}

\begin{document}

\maketitle

\begin{abstract}
  The analysis of global dynamics, particularly the identification and characterization of attractors and their regions of attraction, is essential for complex nonlinear and hybrid systems. Combinatorial methods based on Conley's index theory have provided a rigorous framework for this analysis. However, the computation relies on rigorous outer approximations of the dynamics over a discretized state space, which is challenging to obtain from scattered trajectory data. We propose a methodology that integrates recent advances in switching system identification via convex optimization to bridge this gap between data and topological analysis. We leverage the identified switching system to construct combinatorial outer approximations. This paper outlines the integration of these methods and evaluates the efficacy of computing Morse graphs versus data-driven and statistical approaches.
\end{abstract}

\begin{keywords}
  Conley Index Theory, Morse Graph, Switching Systems, System Identification, Convex Optimization
\end{keywords}

\section{Introduction}
\label{sec:introduction}

Topological dynamics provides a topological characterization of attractors of a dynamical system. For a system of ordinary differential equations, $\dot{x} = f(x)$, this involves identifying invariant sets, their stability, and the connectivity between them. Computational topological methods have successfully provided tools to rigorously identify and characterize even chaotic dynamics \citep{Mischaikow1995Chaos,ComputationalHomology}. Recently, these tools have been applied towards controller design and safety evaluation in robotics due to their superior ability in characterizing regions of attraction (RoAs) against Lyapunov-based methods \citep{Ewerton2022MG,Ewerton2022GP,MORALS}.

Fundamentally, the method consists of the computation of a \emph{Morse graph}, whose existence is rooted in Conley index theory. The computation relies on a combinatorial representation of the dynamics over a discretization of the state space $\cX$. The main bottleneck is the construction of a multivalued map $\cF\colon\cX \mvmap \cX$ that serves as an \emph{outer approximation} of the underlying continuous flow.

The primary limitation in applying these topological methods to data-driven scenarios is the difficulty of guaranteeing the outer approximation property from trajectory data. \citet{Bogdan2020,Yim25} have attempted to address the issue with respect to Conley index theory, nonetheless, errors in this approximation can lead to spurious topological features or missed dynamics. \citet{Bogdan2024} and \citet{Ewerton2022GP} have employed a probabilistic characterization of the dynamics, constructing outer approximations with confidence levels rather than deterministic guarantees that allows the reconstruction of topological invariants.

We propose to address this limitation by utilizing identified dynamical models obtained via the convex optimization framework for switching system identification as in \citet{Kaito2025}. This framework identifies Switching Linear Systems (SLSs) and Switching Polynomial Systems (SPSs) from data, yielding a piecewise-smooth vector field $\hat{f}$. The analytical structure of the identified model enables the construction of outer approximations $\cF$, therefore enabling the combinatorial representation of the dynamics. If the learned system $\hat{f}$ approximates the true dynamics $f$ with sufficiently small error, the topological information computed for $\hat{f}$ is equivalent to that of $f$ \citep{ContinuationSheaves}.

This paper outlines the integration of convex optimization-based system identification and combinatorial dynamics. We detail how the output of switching system identification enables the computation of Morse graphs and compare the resulting regions of attraction against alternative state-of-the-art, data-driven and statistical methods.

\section{Topological Dynamics}
\label{sec:dynamics}

We consider a dynamical system $\varphi : \R \times X \to X$ on a compact region $X \subset \R^n$ defined by solutions of $\dot{x} = f(x)$ where $f: X \to \R^n$. An \emph{attractor} is an invariant set $A$ for which there exists an open neighborhood $U$ such that $A=\omega(U)\subset U$, where $\omega$ is the omega-limit.  
The \emph{region of attraction} (RoA) of an attractor $A$ is the set of all initial conditions whose forward trajectories converge to $A$.

Conley index theory offers an algebraic topological framework that rigorously characterizes such attractors as special cases of isolated invariant sets. A compact set $S \subseteq X$ is \emph{invariant} if $\varphi(t,S) = S$ for all $t \in \R$. A compact set $N$ is an \emph{attracting block} if $\varphi(t,N) \subset \Int(N)$ for all $t > 0$. Given nested attracting blocks $N_0 \subset N_1 \subset X$, 
we may study the corresponding Morse set, defined as the maximal invariant set contained in the interior of their set difference: $S \coloneqq \Inv(\cl(N_1 \setminus N_0))$. 

The \emph{(homological) Conley index} of $S$ is the relative homology of the pair $(N_1,N_0)$, namely $CH_*(S) = H_*(N_1,N_0)$. This algebraic topological invariant is independent of the choice of attracting blocks and depends only on the isolated invariant set $S$ \citep{Handbook2002}. The Conley index $CH_*(S)$ provides information about the dynamical behavior within $S$. For instance, if $CH_k(S) \cong \Z$ for some $k \geq 0$ and $CH_n(S) = 0$ for $n\neq k$, then $S$ contains at least one fixed point \citep{McCord1988}. Similar results allow identification of periodic orbits \citep{McCord1995} or chaotic behavior \citep{Mischaikow1995Chaos}. Furthermore, the theory prescribes the existence of a \emph{Morse decomposition} that decomposes the space into finitely many Morse sets that are partially ordered by the flow and contain all the (chain-)recurrent behavior, so that the system is gradient-like outside of the Morse sets.

A key property of topological dynamics is \emph{robustness with respect to perturbations}: attracting blocks, Morse decompositions, and associated topological invariants are preserved under sufficiently small perturbations of the flow \citep{Conley71,ContinuationSheaves}. This robustness justifies the use of an identified approximation $\hat{f} \approx f$ for topological analysis, provided the approximation error is sufficiently small.

\section{Switching System Identification}
\label{sec:optimization}

Switching systems constitute a class of hybrid dynamical systems that evolve according to one of several constituent vector fields, with a discrete mode selecting which dynamics are active at a given state. Formally, a switching system has the form
\[
  \dot{x} = f_{\sigma(x)}(x), \quad \sigma : X \to \setof{1,\ldots,K},
\]
where each mode $i \in \setof{1,\ldots,K}$ is associated with a continuous vector field $f_i: X \to \R^n$. Although the overall system may exhibit discontinuities at the mode boundaries, the dynamics are continuous within each mode. This can be expressed compactly using binary indicator variables as $f(x) = \sum_{j=1}^K \lambda_j(x) f_j(x)$ where $\lambda_j(x) \in \setof{0,1}$ and $\sum_{j=1}^K \lambda_j(x) = 1$.

Recent work by \citet{Kaito2025} has developed a convex optimization framework for identifying switching systems from trajectory data. Given measurements $\{(x^i, \dot{x}^i)\}_{i=1}^N$, the identification problem is formulated as minimizing the discrepancy
\[
  \min_{\lambda^i, C_j} \sum_{i=1}^N \left\| \dot{x}^i - \sum_{j=1}^K \lambda_j^i C_j \Phi(x^i) \right\|_p
\]
where $\lambda_j^i = \lambda_j(x^i)$ are subject to mode constraints $\lambda_j^i \in \setof{0,1}$ and $\sum_j \lambda_j^i = 1$ while $\Phi(x)$ is a feature map (e.g., $\Phi(x) = x$ for SLS or $\Phi(x) = \phi_d(x)$, some polynomial basis up to degree-$d$ polynomial for SPS) and $C_j$ are coefficients matrices (e.g., $C_j = A_j$ for SLS or a matrix of polynomial coefficients for SPS).

The above mixed-integer formulation struggles with  certifiability due to the inherent nonlinearity arising from the cross terms between decision variables $\lambda_j^i$ and $C_j$, and the computational cost due to the mixed-integer constraints. We handle both challenges using bilevel convex optimizations that alternates between mode assignment via semidefinite programming (SDP) using the moment relaxation, and dynamics estimation via linear programming (LP). The key is to realize the mixed-integer constraints on $\lambda_j^i$ as polynomial constraints
\[\lambda_j^i(\lambda_j^i - 1) = 0, \quad \mathbf{1}^T\lambda^i = 1,\]
so that we can relax them to the order-$r$ moment relaxation
\[M_r(y) \succeq 0, \quad M_{r-1}(\lambda_j^i(\lambda_j^i-1) y)=0, \quad M_{r-1}((\mathbf{1}^T \lambda^i-1) y)=0,\]
where $y = (y_\alpha)_{|\alpha| \le 2r}$ is a truncated moment sequence indexed by monomials in $\lambda$, $M_r(y)$ is the moment matrix, and $M_{r-d}(g y)$ is the localizing matrix associated with constraint polynomial $g$ with $d = \lceil \operatorname{deg}g/2\rceil$. More details of the moment relaxation in polynomial optimization can be found in \cite{Lasserre2015}. We choose the $\ell_1$-norm objective and solve the bilevel convex optimizations to identify $\lambda_j^i$ and $C_j$. More specifically, switching system identification using order-1 moment relaxation can be formulated as follows. By using slack variables $\delta_k^i \ge 0$ for $k=1,\dots,n$, and denoting $c_k^i(C,\lambda) = [\dot z^{i} - \sum_{j=1}^M \lambda_j^i C_j \Phi(z^i)]_k$, we solve the following SDP using by fixing the coefficients $C_j$ of the vector fields for $\lambda_j^i$:
\[
  \begin{aligned}
    \min_{\lambda^i,\Lambda^i,\delta^i}\quad & \sum_{i=1}^N\sum_{k=1}^n \delta_k^i \quad \text{s.t.}\   -\delta_k^i \le c_k^i(C,\lambda) \le \delta^i_k, \quad \delta^i_k \ge 0, \\
    &
    \begin{bmatrix}
      1 & (\lambda^i)^T \\
      \lambda^i & \Lambda^i
    \end{bmatrix} \succeq 0, \quad \operatorname{diag}(\Lambda^i) = \lambda^i,\quad \mathbf{1}^T \lambda^i = 1,
  \end{aligned}
\]
where we use $\lambda$ to denote the first order moments $y_j$, and $\Lambda_{ij} = y_{ij}$. Modes are recovered either directly from $\lambda^i$ or by hardening to $e_{j^\star}$ with $j^\star\in\arg\max_j \lambda_j^i$. At the initial iteration, we choose  arbitrary coefficients for the fixed $C_j$. For subsequent iterations, $C_j$ come from solving the following dynamics search LP. Note that for higher-order relaxations $(r>1)$, the linear inequalities become LMIs on localizing matrices. With these fixed $\lambda^i$, we solve the following LP for $C_j$:
\[
  \min_{C_j,\delta^i}\  \sum_{i=1}^N \sum_{k=1}^n \delta_k^i \quad
  \text{s.t.}\ -\delta_k^i \le c_k^i, \le \delta^i_k,\ \delta^i_k \ge 0, \ -\eta \le (C_j)_{\ell m} \le \eta.
\]
We alternate these convex subproblems to obtain the estimates $\hat{\lambda}^i$ and $\hat{C}_j$.
\begin{remark}
  In practice, warm-starting the alternating convex optimization improves convergence speed and accuracy, particularly for systems with a higher number of modes or with overlapping domains. For example, mode assignments $\lambda^i$ are initialized by clustering velocity data (e.g., K-means on $\dot{x}^i$) followed by cluster-wise least-squares regression to initialize $C_j$.
\end{remark}
Once we obtain these mode labels $\hat{\lambda}^i$ for the $N$ samples $(x^i, \dot{x}^i)$, a classical soft-margin polynomial classifier method is used to reconstruct the state-dependent switching law $\hat{\lambda}: \mathbb{R}^n \to \setof{0,1}^M$ with $\mathbf{1}^T \hat{\lambda}(x) = 1$. Consequently, the output is an identified system
\[
  \hat{f}(x) = \sum_{j=1}^M \hat{\lambda}_j(x) \hat{f}_j(x),
\]
where $\hat{f}_j(x) = \hat{A}_j x$ for SLS or $\hat{f}_j(x) = \hat{C}_j \phi_d(x)$ for SPS.

\section{Combinatorial Dynamics}
\label{sec:combinatorial}

Our goal is to analyze the discrete-time dynamics induced by the time-$\tau$ map $\varphi_\tau: X \to X$ for some fixed $\tau > 0$ and obtain insights into the dynamical structure of $\varphi$ as in \cite{LatticeI,LatticeII,LatticeIII}. Given the identified switching system $\hat{f}$, we construct a combinatorial representation of its dynamics over a discretization of the state space as in \citet{Kalies2005}.

\begin{defn}
  A grid $\cX$ on $X \subset \R^n$ is a finite set together with a geometric realization map $|\cdot| : \cX \to X$ such that for each $\xi \in \cX$, $|\xi|$ is homeomorphic to a closed ball in $\R^n$, $X = \bigcup_{\xi \in \cX} |\xi|$, and $|\xi| \cap \Int(|\xi'|) = \emptyset$ for all $\xi \neq \xi' \in \cX$. The \emph{diameter} of $\xi$ is $\diam(\xi) = \max_{x,x' \in |\xi|} \| x-x' \|$, and the diameter of $\cX$ is $\diam(\cX)=\max_{\xi \in \cX} \diam(\xi)$.
\end{defn}
\begin{rem}
  In practice, we use cubical grids as in \citet{ComputationalHomology}. For a cubical cell $\xi \in \cX$, we denote by $V(\xi)$ the set of $2^n$ vertices (corner points) of $|\xi|$. Note that for any $x \in |\xi|$, there exists $v \in V(\xi)$ such that $\|x - v\| \leq \diam(\xi)/2$.
\end{rem}

\subsection{Outer Approximation}

\begin{defn}
  A multivalued map $\cF: \cX \mvmap \cX$ is an outer approximation of $\hat{\varphi}_\tau$ if
  \[
    \hat{\varphi}_\tau(|\xi|) \subset \Int \left| \cF(\xi) \right|, \quad \forall \xi \in \cX.
  \]
\end{defn}
The key advantage of identifying an analytical switching system $\hat{f}$ from data is that it induces a flow $\hat{\varphi}$ whose time-$\tau$ map $\hat{\varphi}_\tau$ can be systematically approximated. For each cell $\xi \in \cX$, we integrate the system from the cell's corners (and optionally its center) for time $\tau$ to obtain points $\setdef{\hat{\varphi}_\tau(v)}{v \in V(\xi)}$, where $V(\xi)$ denotes the vertices of $|\xi|$. We then compute a bounding box $\cR_\tau(|\xi|)$ enclosing these integrated points and define
\[
  \cF(\xi) := \setdef{ \xi' \in \cX }{|\xi'| \cap \cR_\tau(|\xi|) \neq \emptyset }.
\]

For a fine enough grid, this bounding box construction provides an outer approximation by exploiting the analytical structure of the identified model. Alternatively, Lipschitz bounds can be computed analytically for each mode as in \citet{Ewerton2022MG}, allowing for an outer approximation at a given resolution, though the quality of the outer approximation depends on the Lipschitz constants.

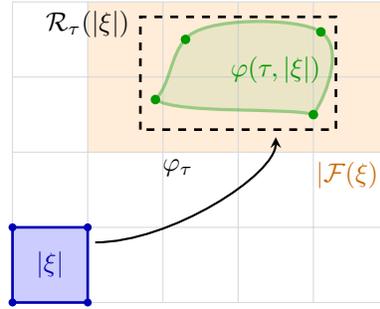
\begin{figure}[ht]
  \centering
  \begin{tikzpicture}[
      scale=1.0,
      >=stealth,
      every node/.style={font=\small},
    ]

    \colorlet{gridcolor}{gray!30}
    \colorlet{source_color}{blue!70!black}
    \colorlet{source_fill}{blue!20}
    \colorlet{target_color}{green!60!black}
    \colorlet{target_fill}{green!60!black}
    \colorlet{outer_approx_color}{orange!80!black}
    \colorlet{outer_approx_fill}{orange!15}
    \colorlet{bounding_box_color}{black}

    \draw[gridcolor, line width=0.4pt] (0,0) grid[step=1] (5,4);
    \fill[source_fill] (0,0) rectangle (1,1);
    \draw[source_color, line width=1pt] (0,0) rectangle (1,1);
    \node[source_color] at (0.5,0.5) {$|\xi|$};

    \foreach \x/\y/\name in {0/0/1, 1/0/2, 1/1/3, 0/1/4} {
      \fill[source_color] (\x,\y) circle (1.5pt);
      \coordinate (v\name) at (\x,\y);
    }

    \begin{scope}[on background layer]
      \fill[outer_approx_fill] (1,2) rectangle (5,4);
    \end{scope}
    \coordinate (w1) at (1.9,2.7);
    \coordinate (w2) at (4.0,2.5);
    \coordinate (w3) at (4.1,3.6);
    \coordinate (w4) at (2.3,3.5);

    \draw[->, black, line width=0.8pt]
    (1.1,0.8) .. controls ++(1.0,0.0) and ++(-0.0,-0.5) .. (3.5,2.2);
    \foreach \i in {1,2,3,4} {
      \fill[target_color] (w\i) circle (1.8pt);
    }

    \draw[bounding_box_color, dashed, line width=1pt]
    (1.7,2.3) rectangle (4.3,3.8);
    \draw[target_color, line width=1.2pt, opacity=0.4, fill=target_fill, fill opacity=0.08]
    (w1) .. controls ++(0.3,-0.2) and ++(-0.3,0.1) .. (w2)
    .. controls ++(0.2,0.3) and ++(0.3,-0.2) .. (w3)
    .. controls ++(-0.3,0.2) and ++(0.2,0.3) .. (w4)
    .. controls ++(-0.2,-0.2) and ++(0.2,0.2) .. (w1);
    \node[black] at (2.2,1.8) {$\varphi_\tau$};
    \node[bounding_box_color, font=\small] at (1.0,3.7) {$\cR_\tau(|\xi|)$};
    \node[outer_approx_color] at (4.5,1.7) {$|\cF(\xi)|$};
    \node[target_color, font=\small] at (3.5,3.1) {$\varphi(\tau,|\xi|)$};

  \end{tikzpicture}%
  \caption{Outer approximation of time-$\tau$ map via the bounding box $\cR_\tau(|\xi|)$.}
  \label{fig:outer_approx_schematic}
\end{figure}

When the identified flow $\hat{\varphi}_\tau$ approximates the true flow $\varphi_\tau$ with known error bounds, we can leverage this approximation to construct an outer approximation of the true dynamics.

\begin{prop}\label{thm:outer_approximation}
  If $\|\hat{\varphi}_\tau - \varphi_\tau\| \leq \delta$ for some $0<\delta< \diam(\cX)/2$, then the coarser outer approximation of $\hat{\varphi}_\tau$
  \[
    \cF_c(\xi) = \setdef{\xi' \in \cX}{|\xi'| \cap B(\cR_\tau(|\xi|), \delta) \neq \emptyset}
  \]
  is an outer approximation of $\varphi_\tau$. Furthermore, if $\delta < d(\varphi_\tau(|\xi|), \partial|\cF(\xi)|)$ where
  \[
    \cF(\xi) = \setdef{\xi' \in \cX}{|\xi'| \cap \cR_\tau(|\xi|) \neq \emptyset},
  \]
  then $\cF_c = \cF$.
\end{prop}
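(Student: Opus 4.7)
The plan is to prove each claim in turn, leveraging the key underlying property used in the construction: by definition of $\cR_\tau(|\xi|)$ as a bounding box of $\hat{\varphi}_\tau$ applied to the vertices, $\cF$ is an outer approximation of $\hat{\varphi}_\tau$, so in particular $\hat{\varphi}_\tau(|\xi|) \subset \cR_\tau(|\xi|) \subset |\cF(\xi)|$ for the fine-enough grids contemplated in the preceding discussion.

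For the first assertion, I would start from the uniform bound $\|\hat{\varphi}_\tau - \varphi_\tau\|\le \delta$. For any $x\in|\xi|$, the point $\hat{\varphi}_\tau(x)$ lies in $\cR_\tau(|\xi|)$, so $\varphi_\tau(x)$ lies in the open ball of radius $\delta$ about $\hat{\varphi}_\tau(x)$, hence in $B(\cR_\tau(|\xi|),\delta)$. This gives the set-theoretic inclusion $\varphi_\tau(|\xi|)\subset B(\cR_\tau(|\xi|),\delta)$. Since the grid covers $X$, every point of this open neighborhood lies in some cell $\xi'$, and that cell automatically satisfies $|\xi'|\cap B(\cR_\tau(|\xi|),\delta)\ne\emptyset$, hence $\xi'\in\cF_c(\xi)$. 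Therefore $B(\cR_\tau(|\xi|),\delta)\cap X\subset|\cF_c(\xi)|$, and because $B(\cR_\tau(|\xi|),\delta)$ is open, this upgrades to $\varphi_\tau(|\xi|)\subset\Int|\cF_c(\xi)|$. The bound $\delta<\diam(\cX)/2$ is used to ensure that the $\delta$-neighborhood genuinely selects neighboring cells rather than skipping over any, and to tame the behavior near $\partial X$.

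For the second assertion, the inclusion $\cF(\xi)\subset\cF_c(\xi)$ is immediate from $\cR_\tau(|\xi|)\subset B(\cR_\tau(|\xi|),\delta)$. For the converse, I would argue by contradiction. Suppose $\xi'\in\cF_c(\xi)\setminus\cF(\xi)$; then there is $y\in|\xi'|$ with $d(y,\cR_\tau(|\xi|))<\delta$ while $|\xi'|\cap\cR_\tau(|\xi|)=\emptyset$. Since $\cR_\tau(|\xi|)\subset|\cF(\xi)|$ and cubical cells outside $\cF(\xi)$ meet $|\cF(\xi)|$ only on its boundary, the segment from $y$ to a nearest point of $\cR_\tau(|\xi|)$ either has $y$ itself on $\partial|\cF(\xi)|$ or must cross $\partial|\cF(\xi)|$; in either case $d(y,\partial|\cF(\xi)|)<\delta$. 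Transferring this estimate back through the $\delta$-bound $\|\hat{\varphi}_\tau-\varphi_\tau\|\le\delta$ and the inclusion $\hat{\varphi}_\tau(|\xi|)\subset\cR_\tau(|\xi|)$ places a point of $\varphi_\tau(|\xi|)$ within distance $\delta$ of $\partial|\cF(\xi)|$, contradicting the margin hypothesis $\delta<d(\varphi_\tau(|\xi|),\partial|\cF(\xi)|)$.

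The delicate step will be this last transfer in part~(2): one must carefully track that the bounding box $\cR_\tau(|\xi|)$ is not an arbitrary superset of $\hat{\varphi}_\tau(|\xi|)$ but is controlled by the vertex images of a cell of diameter $\diam(\cX)$, so that a point $\delta$-close to $\cR_\tau(|\xi|)$ is genuinely close to some $\hat{\varphi}_\tau(x)$ and hence to $\varphi_\tau(x)$. A Lipschitz estimate for $\hat{f}$ on each mode, as mentioned in the discussion of the bounding box construction, is the natural tool to quantify this, and the assumption $\delta<\diam(\cX)/2$ is exactly the slack needed for the argument to close without further inflating the neighborhood.
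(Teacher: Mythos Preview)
Your treatment of the first assertion is essentially the paper's: both arguments chain $\varphi_\tau(|\xi|)\subset B(\hat\varphi_\tau(|\xi|),\delta)\subset B(\cR_\tau(|\xi|),\delta)$ from the uniform bound and the bounding-box containment, and then invoke the definition of $\cF_c$ to obtain the interior inclusion.

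For the second assertion you diverge from the paper and your route has a genuine gap. The paper's argument is a single direct step: from the margin hypothesis it asserts $B(\cR_\tau(|\xi|),\delta)\subset|\cF(\xi)|$, so any cell meeting $B(\cR_\tau(|\xi|),\delta)$ already meets $|\cF(\xi)|$, giving $\cF_c(\xi)\subset\cF(\xi)$. No Lipschitz control on $\hat f$ is invoked, and there is no need to relate points of $\cR_\tau(|\xi|)$ back to $\varphi_\tau(|\xi|)$. Your contradiction argument, by contrast, starts from a point $y\in|\xi'|$ that is $\delta$-close to the box $\cR_\tau(|\xi|)$ and tries to produce a point of $\varphi_\tau(|\xi|)$ within $\delta$ of $\partial|\cF(\xi)|$. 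That transfer does not go through: the nearest point of $\cR_\tau(|\xi|)$ to $y$ may be a corner of the bounding box with no nearby point of $\hat\varphi_\tau(|\xi|)$, so the inclusion $\hat\varphi_\tau(|\xi|)\subset\cR_\tau(|\xi|)$ is useless in this direction. You flag this yourself as ``the delicate step'' and reach for mode-wise Lipschitz bounds, but even granting $\cR_\tau(|\xi|)\subset B(\hat\varphi_\tau(|\xi|),L\cdot\diam(\cX)/2)$, the accumulated distance from $\partial|\cF(\xi)|$ to a point of $\varphi_\tau(|\xi|)$ becomes of order $2\delta$ plus a Lipschitz term, which does not contradict the hypothesis $\delta<d(\varphi_\tau(|\xi|),\partial|\cF(\xi)|)$. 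The paper sidesteps all of this by working directly with the containment of $B(\cR_\tau(|\xi|),\delta)$ inside $|\cF(\xi)|$ rather than attempting to pull the estimate back to the true image.
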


\begin{proof}
  We have $\varphi_\tau(|\xi|) \subset B(\hat{\varphi}_\tau(|\xi|), \delta)$ by assumption, and $\hat{\varphi}_\tau(|\xi|) \subseteq \cR_\tau(|\xi|)$ by construction. Thus $\varphi_\tau(|\xi|) \subset B(\cR_\tau(|\xi|), \delta)$. By definition of $\cF_c(\xi)$, any cell intersecting $B(\cR_\tau(|\xi|), \delta)$ is included, so $\varphi_\tau(|\xi|) \subset \Int|\cF_c(\xi)|$. Furthermore, notice that $\cF(\xi) \subseteq \cF_c(\xi)$ by construction. If $\delta < d(\varphi_\tau(|\xi|), \partial|\cF(\xi)|)$, then $B(\cR_\tau(|\xi|), \delta) \subset |\cF(\xi)|$, so $\cF_c(\xi)=\cF(\xi)$ for any $\xi \in \cX$.
\end{proof}

\subsection{Morse Graph and Regions of Attraction}

Morse graphs play the role of Morse decompositions in combinatorial dynamics. They identify a recurrent-like and a gradient-like structure as follows. 
The combinatorial map $\cF$ induces a directed graph with vertices $\cX$ and edges $(\xi,\xi') \in E$ when $\xi' \in \cF(\xi)$. The strongly connected components (SCCs) partition $\cX$ into trivial and nontrivial SCCs. The \emph{condensation graph} $CG(\cF)$ is the directed acyclic graph whose vertices are the SCCs. The reachability relation in $CG(\cF)$ induces a partial order $\leq$ on its vertices. Let $(\sP,\leq)$ be a poset that indexes the nontrivial SCCs where the partial order is inherited from reachability in $CG(\cF)$. We denote by $M(p)$ the set of elements in the equivalence class and call them Morse sets.
\begin{defn}
  The Morse Graph $\MG(\cF)$ is the Hasse diagram of the poset $(\sP, \leq)$.
\end{defn}
For each $p\in \sP$, we associate an attractor via $\sA(p)=\omega(|\bigcup_n \cF^n(M(p))|)$ \citep{LatticeII}. 
Its \emph{region of attraction} is defined by the reachability of $\cF$:
\[	
    \RoA(p) = \left|\setdef{\xi \in \cX}{ \exists k \geq 0 \text{ s.t. } \cF^k(\xi) \subseteq \bigcup_{n \geq 0} \cF^n(M(p)) }\right|.  
\]

\section{Examples}
\label{sec:examples}
We compare four approaches for constructing outer approximations and computing Morse sets, Morse graphs and RoAs:
\begin{enumerate}
  \item \textbf{Ground-truth:} When the true dynamics $f$ are known analytically, we construct the outer approximation $\cF$ via numerical integration of $f$ using the bounding box method described in Section~\ref{sec:combinatorial} \citep{Kalies2005}. This serves as the reference outer approximation.
  \item \textbf{Lipschitz:} Following the approach in \citet{Ewerton2022MG}, we construct the outer approximation using a global Lipschitz constant $L_\tau$ estimated using $f$:
    \[
      \cF_{L}(\xi) = \setdef{ \xi' \in \cX }{ |\xi'| \cap B(\varphi_\tau(v), L_\tau d/2) \neq \emptyset \text{ for some } v \in V(\xi) },
    \]
    where $d=\diam(\cX)$. 
  \item \textbf{Gaussian-Process:} Following \citet{Ewerton2022GP}, we learn the flow from trajectory data using Gaussian-Processes and compute a probabilistic-inspired outer approximation
    \[
      \cF_{GP}(\xi) =\setdef{ \xi' \in \cX}{|\xi'| \cap \left( E^\delta_{\Sigma(\xi)} \cup \cR_\tau(|\xi|) \right) \neq \emptyset},
    \]
    where $E^\delta_{\Sigma(\xi)} \coloneqq \prod_{n=1}^{M} I^\alpha_{n,\Sigma(\xi)}$ is the confidence ellipsoid from GP predictions at the center of $|\xi|$ with confidence level $1-\delta=0.95$.
  \item \textbf{Ours:} Using the same trajectory data used by the Gaussian-Process approach, together with the vector field on the trajectory, we identify a switching system $\hat{f}$ as in Section~\ref{sec:optimization} and construct the outer approximation $\hat{\cF}$ as in Section~\ref{sec:combinatorial}.
\end{enumerate}
For each example, we compare the obtained Morse Graphs. For a fair comparison of the Morse sets and RoAs, we define an \emph{Aggregated Morse Graph} 
from the the expected Morse decomposition. Then, for each method, we match all computed Morse sets to the Aggregated Morse Graph. 
This matching is done by spatial clustering while respecting the partial order. 
Then we compute the Intersection-over-Union (IoU) of Aggregated Morse sets and Aggregated Regions of Attraction with respect to the reference method.

\subsection{Toggle Switch: Bistability in Gene Regulation}

We consider a genetic toggle switch  where two genes repress each other to create bistability \citep{Gardner2000}. We may represent it by a planar piecewise linear system with state $x = (x_1, x_2) \in \R^2$ representing protein concentrations:
\[
  \dot{x} = A x + b_{ij}, \quad x \in \R^2, (i,j) \in \setof{0,1}^2
\]
where $A = -\mathrm{diag}(\gamma_1,\gamma_2)$
and the production vector $b_{ij}$ switches at thresholds $T_1, T_2 > 0$:
\[
  b_{ij} =
  \begin{bmatrix}
    L_1 \\ L_2
  \end{bmatrix} +
  \begin{bmatrix}
    H(T_2-x_2)(U_1 - L_1) \\
    H(T_1-x_1)(U_2 - L_2)
  \end{bmatrix}, \text{ where $H$ is the Heaviside function }
  H(x) =
  \begin{cases}
    0 & x < 0, \\
    1 & x \geq 0.
  \end{cases}
\]
The parameters $L_i,U_i$ represent the basal and activated production rates. \citet{RookFields} have shown that, for gene regulatory networks modeled by sufficiently steep functions, the dynamics do not rely on the precise parameters, but on a set of inequalities that must be satisfied. In this setting, bistability is a byproduct of the simple assumption $0 < L_i < \gamma_i T_i < U_i$. We set the parameters to $\gamma_i = 1.0$, $T_i = 3.0$, $L_i = 1.0$ and $U_i = 5.0$ for $i=1,2$.

The system has two stable fixed points corresponding to states where one gene dominates and represses the other. The separatrix between these basins of attraction passes through an unstable saddle-like region. Note, however, that for the switching system, the saddle point is not well-defined. These three regions emerge as distinct Morse sets in the combinatorial analysis.

We generate $N=50$ trajectories over $[0,10]$ with $\tau = 1.0$ and discretize $[0,6]^2$ into $2^7 \times 2^7$ cubical cells. Figure~\ref{fig:toggleswitch} shows the computation of Morse sets and graphs derived from each outer approximation. Figure~\ref{fig:toggle_combined} shows spatially aggregated results while respecting its connectivity. Table~\ref{tab:toggle_metrics} summarizes the quantitative comparison. 

Our method learns a switching linear system $\hat{f}(x) = Ax + b_{\sigma(x)}$ where 
$A = -I_2$ and the mode $\sigma(x) \in \{1,2,3,4\}$ is determined by learned linear switching surfaces 
$s_i(x) = c_i^T x + d_i$ via $\sigma(x) = 1 +\mathds{1}_{s_1(x) > 0} + 2\cdot \mathds{1}_{s_2(x) > 0}$, 
with production vectors $b_0 = [1,5]^T, b_1 = [1,1]^T, b_2 = [5,5]^T, b_3 = [5,1]^T$. Due to the relatively small sample size, the recovered switching surfaces exhibit overfitting, which causes imperfect alignment with the ground-truth surfaces, despite the perfect matching of the mode assignments in the SDP step (see Figure~\ref{fig:learned sls}). Nevertheless, we still obtain an isomorphic Morse Graph to the ground-truth as expected from Proposition \ref{thm:outer_approximation}.

\begin{figure}[ht]
    \centering
    \footnotesize
    \begin{tabular}{cc}
    \includegraphics[width=0.25\textwidth]{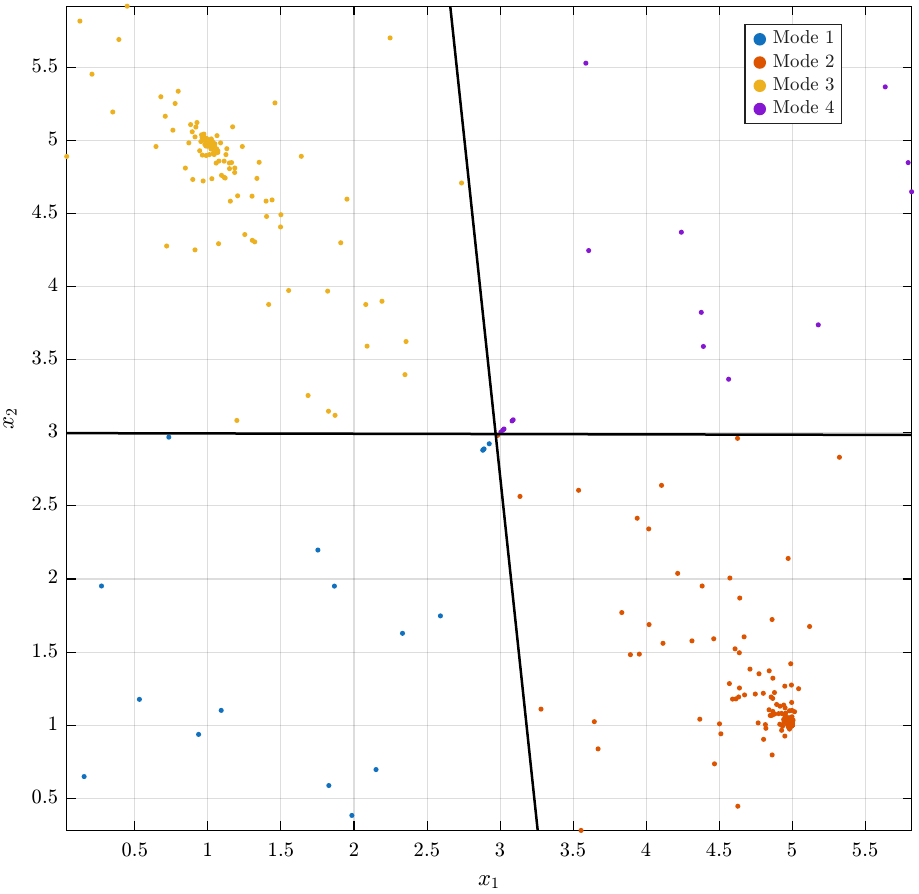}&
    \includegraphics[width=0.25\textwidth]{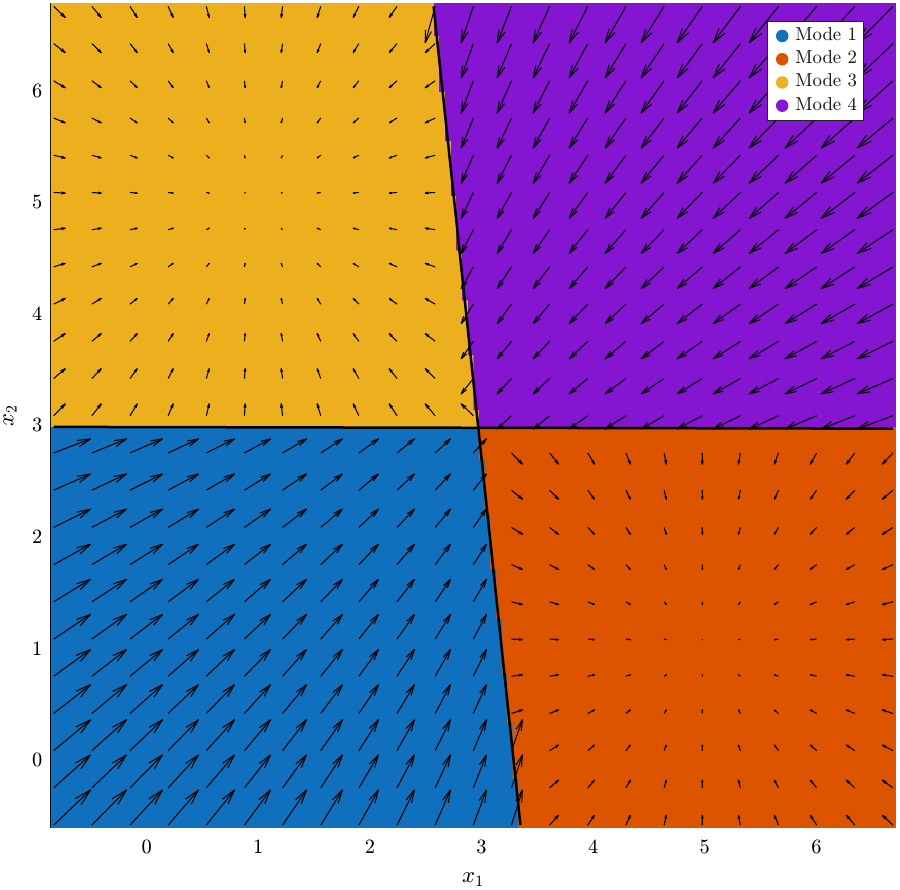}\\
    (a) Training samples & (b) Learned vector fields
    \end{tabular}
    \caption{Learned switching linear system colored by learned modes.}
    \label{fig:learned sls}
\end{figure}

\begin{figure}[ht]
  \centering
  \footnotesize
  \begin{tabular}{cccc}
    \multicolumn{4}{c}{\textbf{Morse Sets}} \\[0.3em]
    \includegraphics[width=0.22\textwidth]{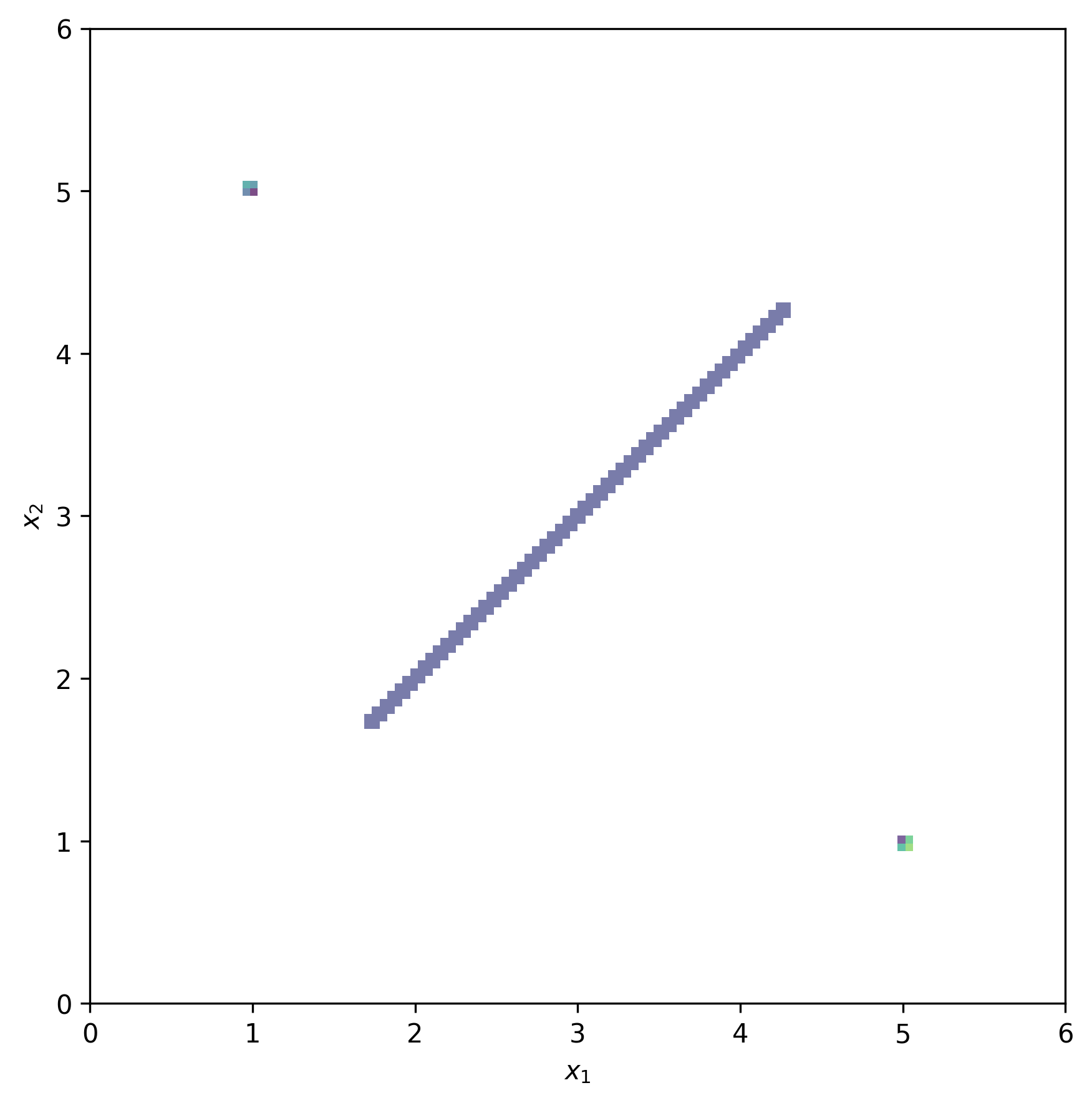} &
    \includegraphics[width=0.22\textwidth]{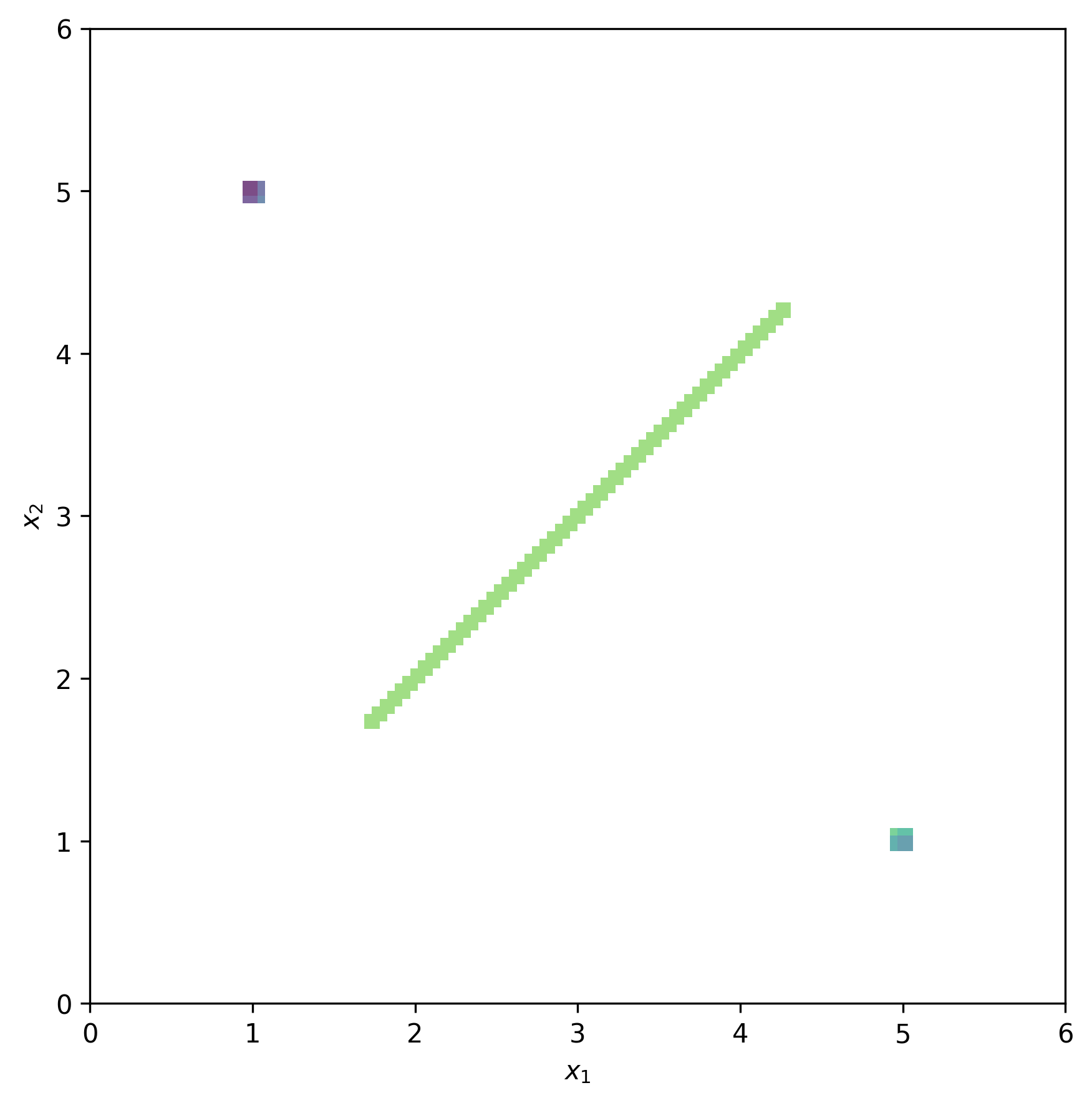} &
    \includegraphics[width=0.22\textwidth]{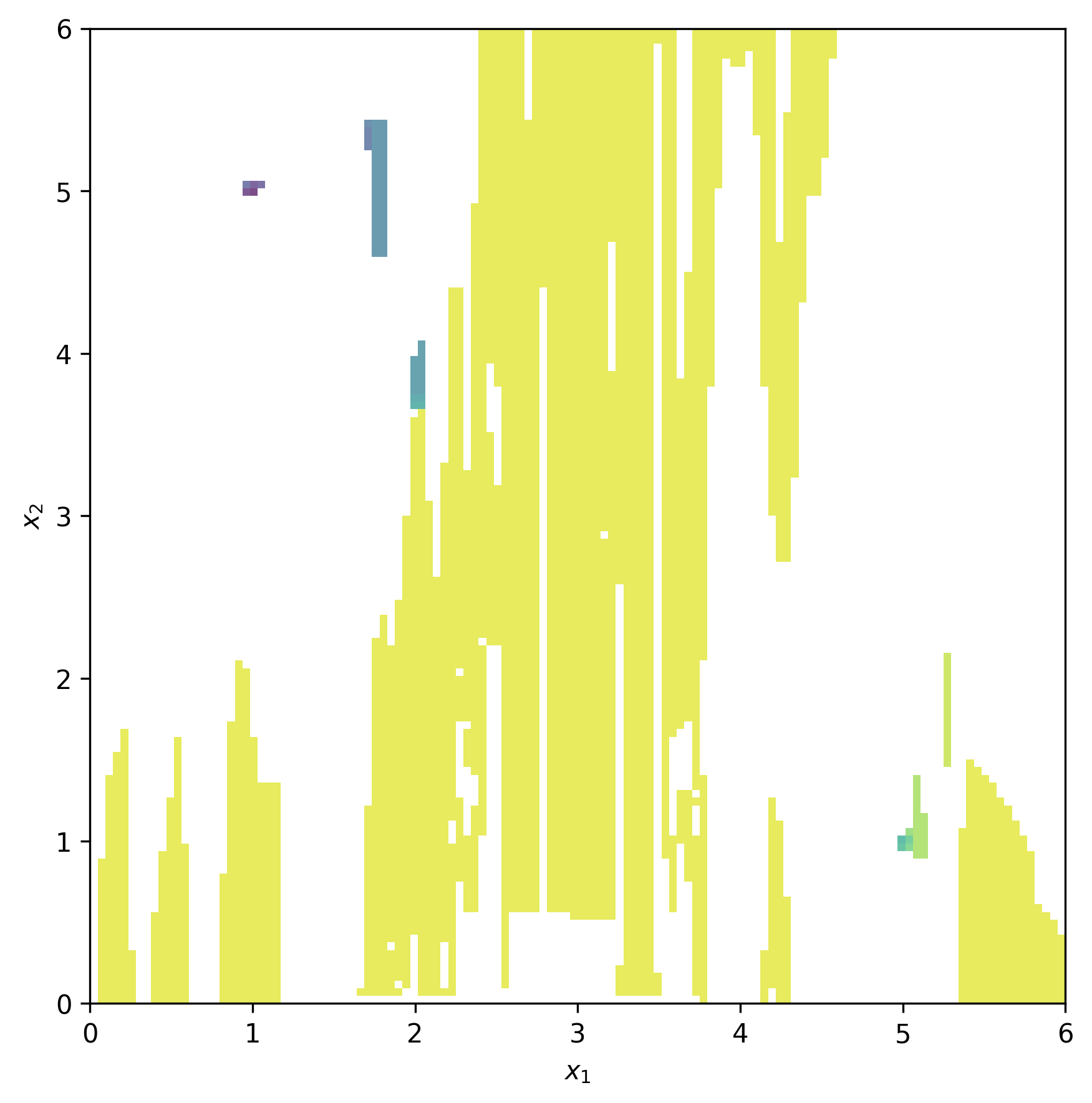} &
    \includegraphics[width=0.22\textwidth]{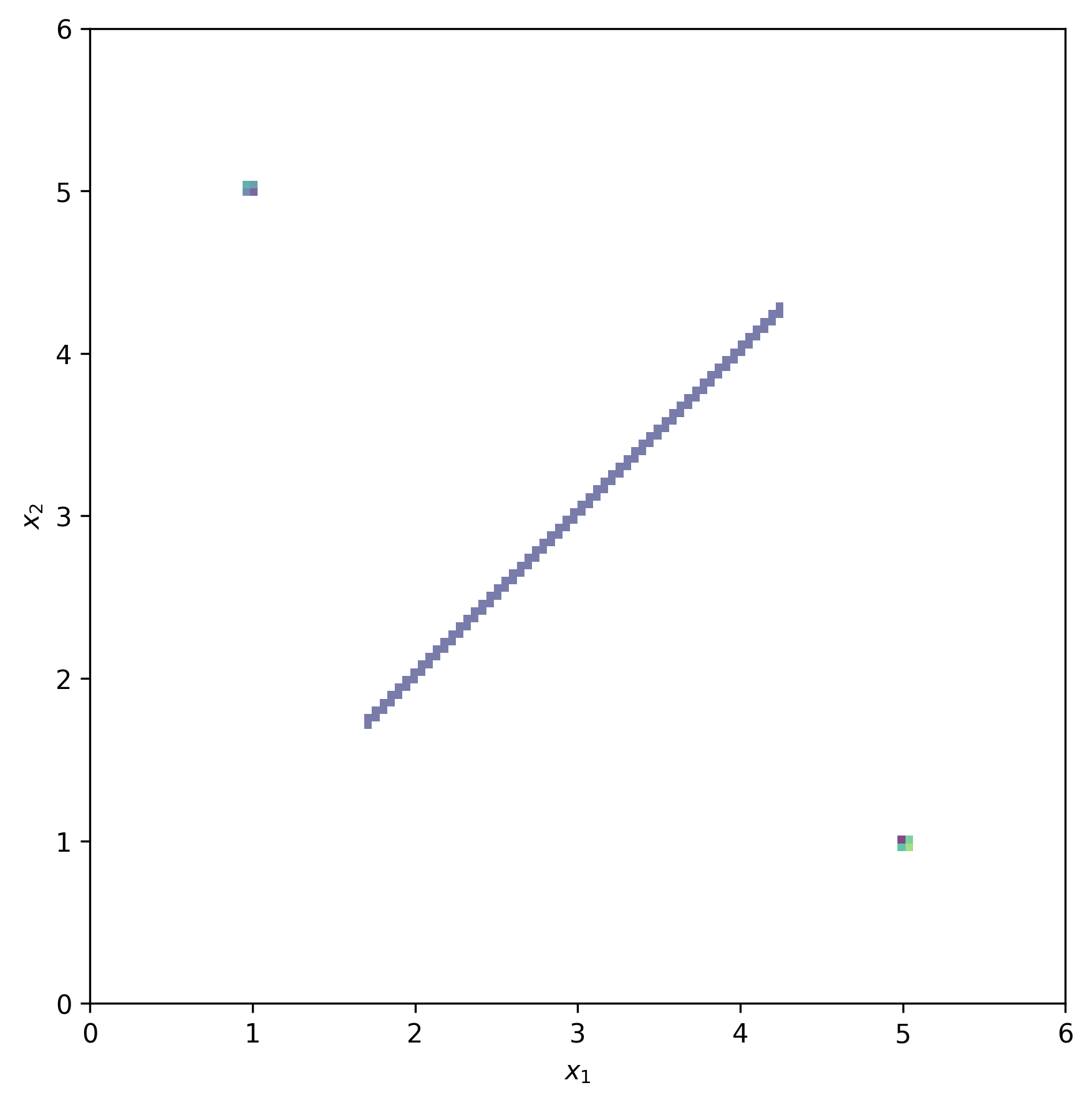} \\[0.2em]
    \multicolumn{4}{c}{\textbf{Morse Graphs}} \\[0.3em]
    \includegraphics[width=0.22\textwidth]{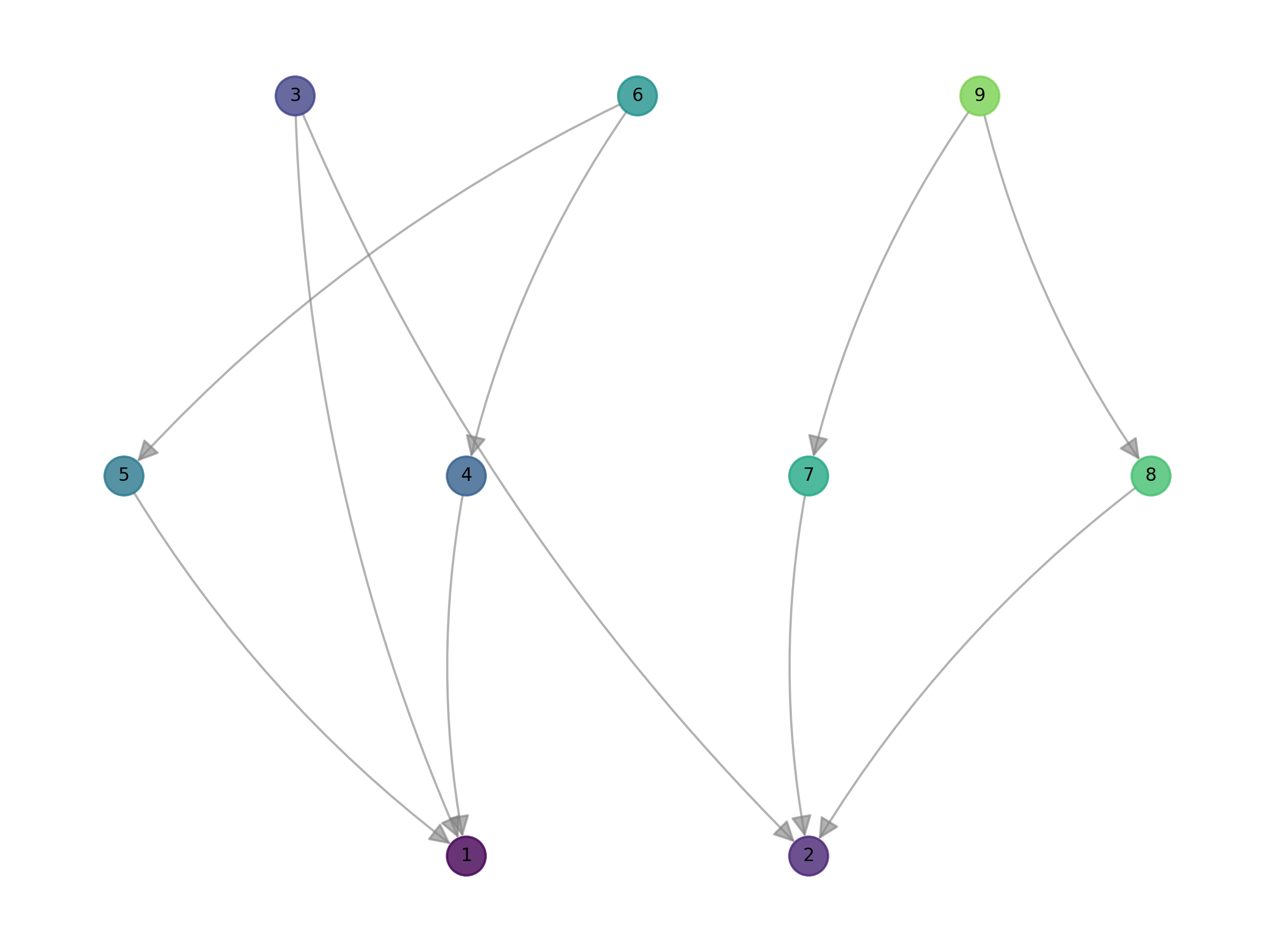} &
    \includegraphics[width=0.22\textwidth]{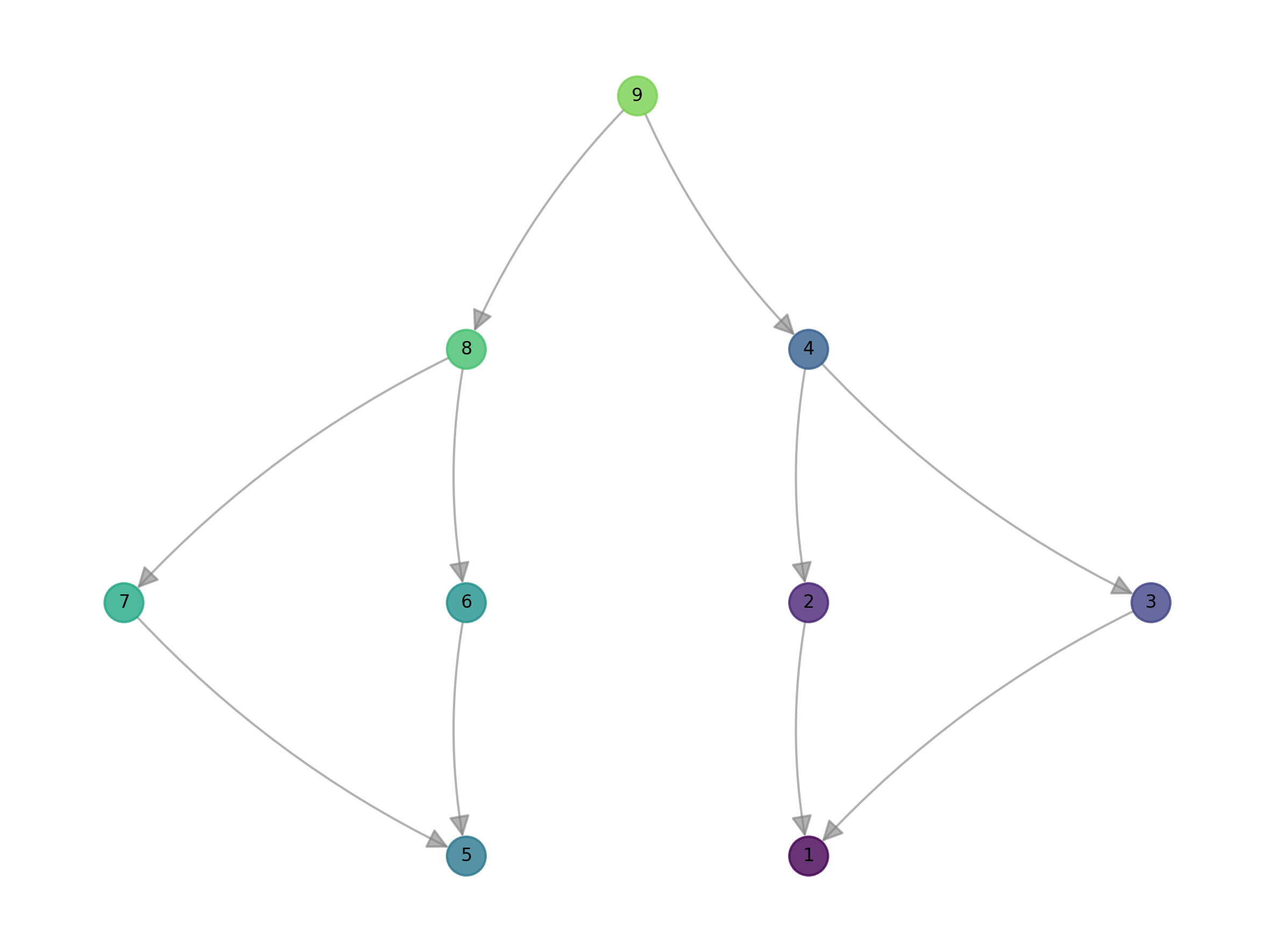} &
    \includegraphics[width=0.22\textwidth]{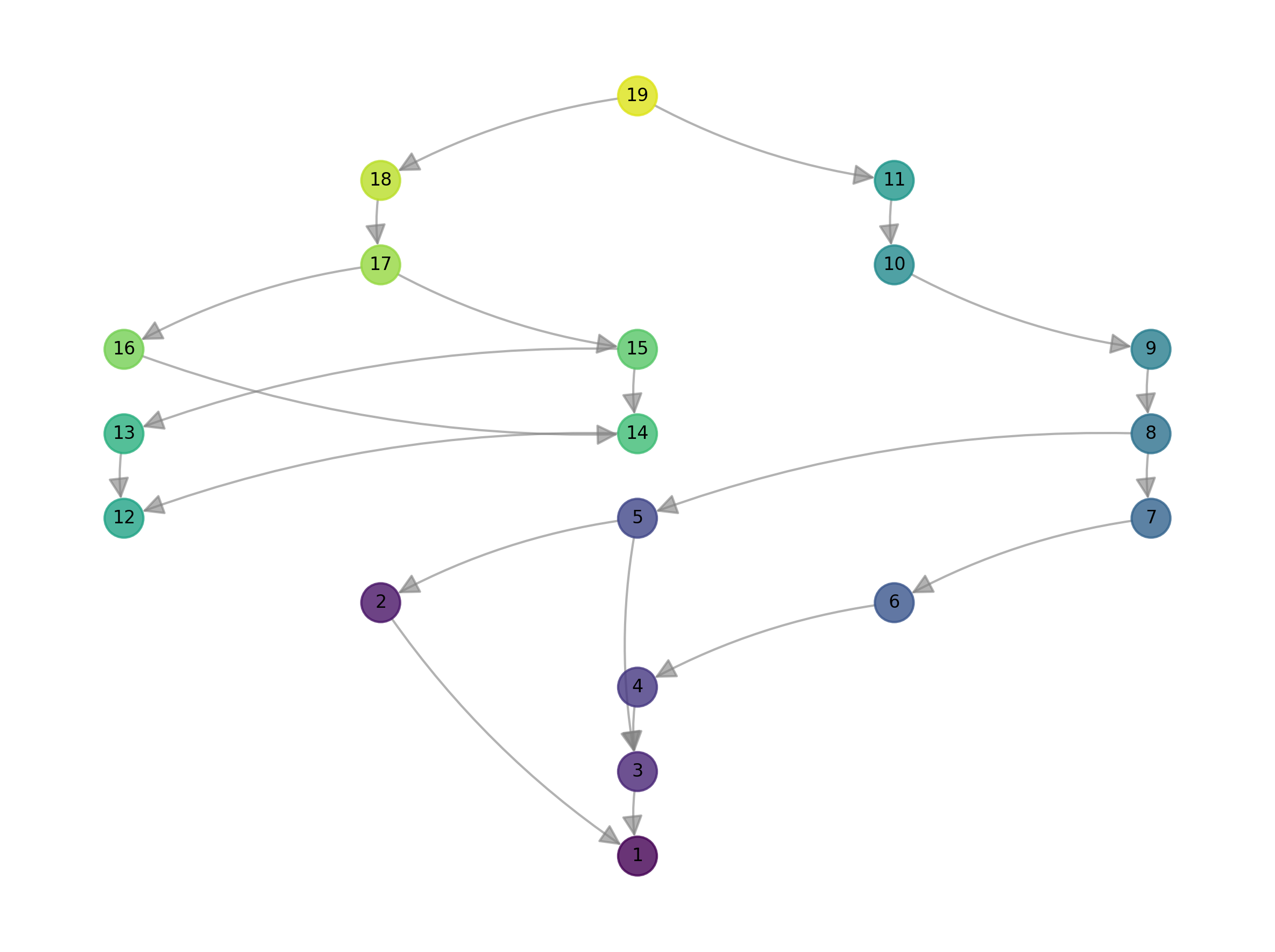} &
    \includegraphics[width=0.22\textwidth]{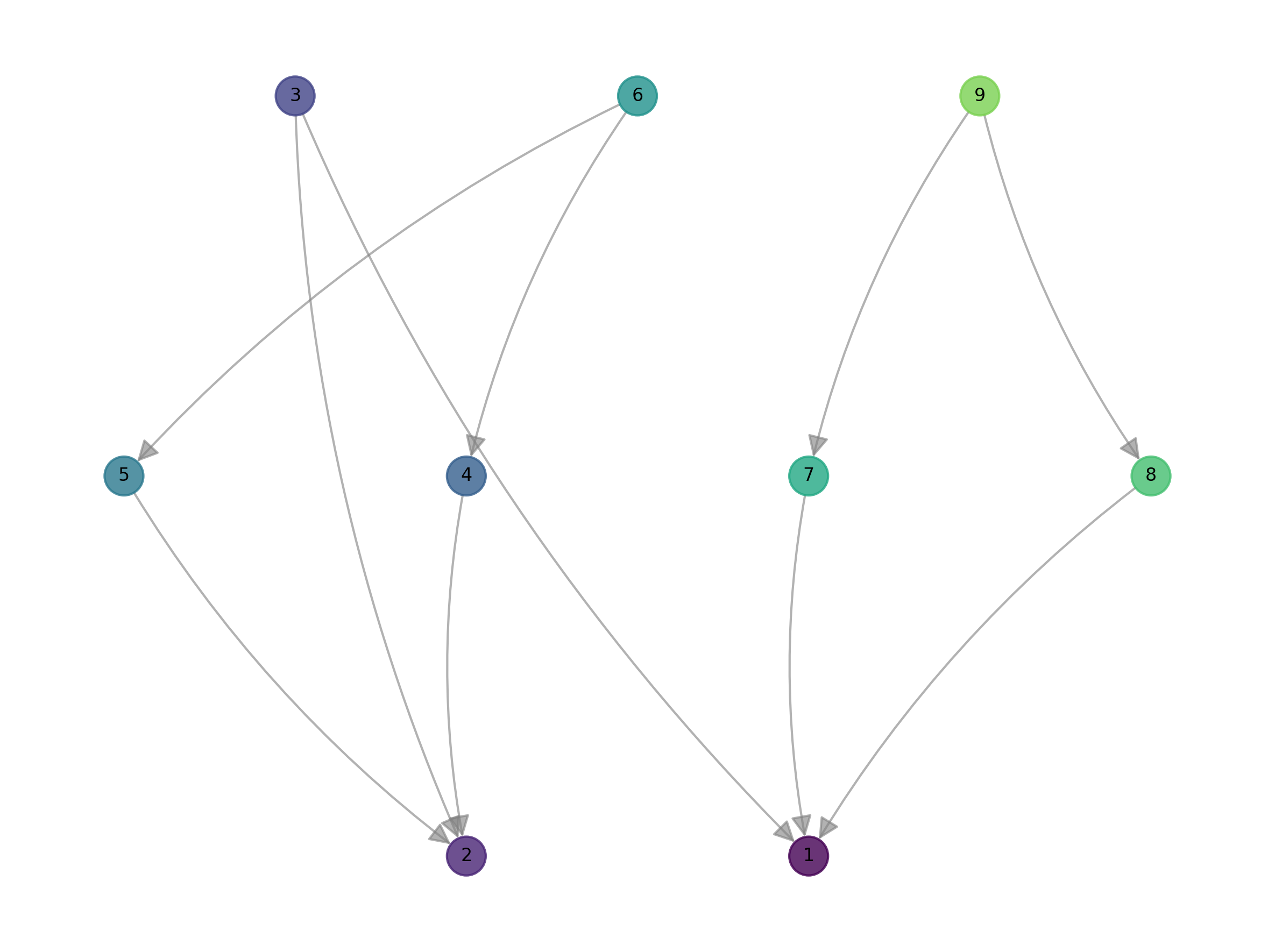} \\
    (a) Ground-Truth & (b) Lipschitz & (c) Gaussian-Process & (d) Ours
  \end{tabular}
  \caption{Morse sets (top) and Morse graphs (bottom).}
  \label{fig:toggleswitch}
\end{figure}

\begin{figure}[ht]
  \centering
  \footnotesize
  \begin{tabular}{ccccc}
    \raisebox{0.3\height}{
      \begin{tikzpicture}[scale=0.9,
        morse/.style={circle, draw, line width=1.2pt, minimum size=20pt, font=\bfseries}]
        \node[morse, fill=magenta!80, text=white] (n3) at (0,1.3) {3};
        \node[morse, fill=orange!40!yellow!60!white] (n1) at (-0.8,0) {1};
        \node[morse, fill=cyan!60!blue!90, text=white] (n2) at (0.8,0) {2};
        \draw[->, line width=1.2pt] (n3) -- (n1);
        \draw[->, line width=1.2pt] (n3) -- (n2);
      \end{tikzpicture}
    } &
    \includegraphics[width=0.17\textwidth]{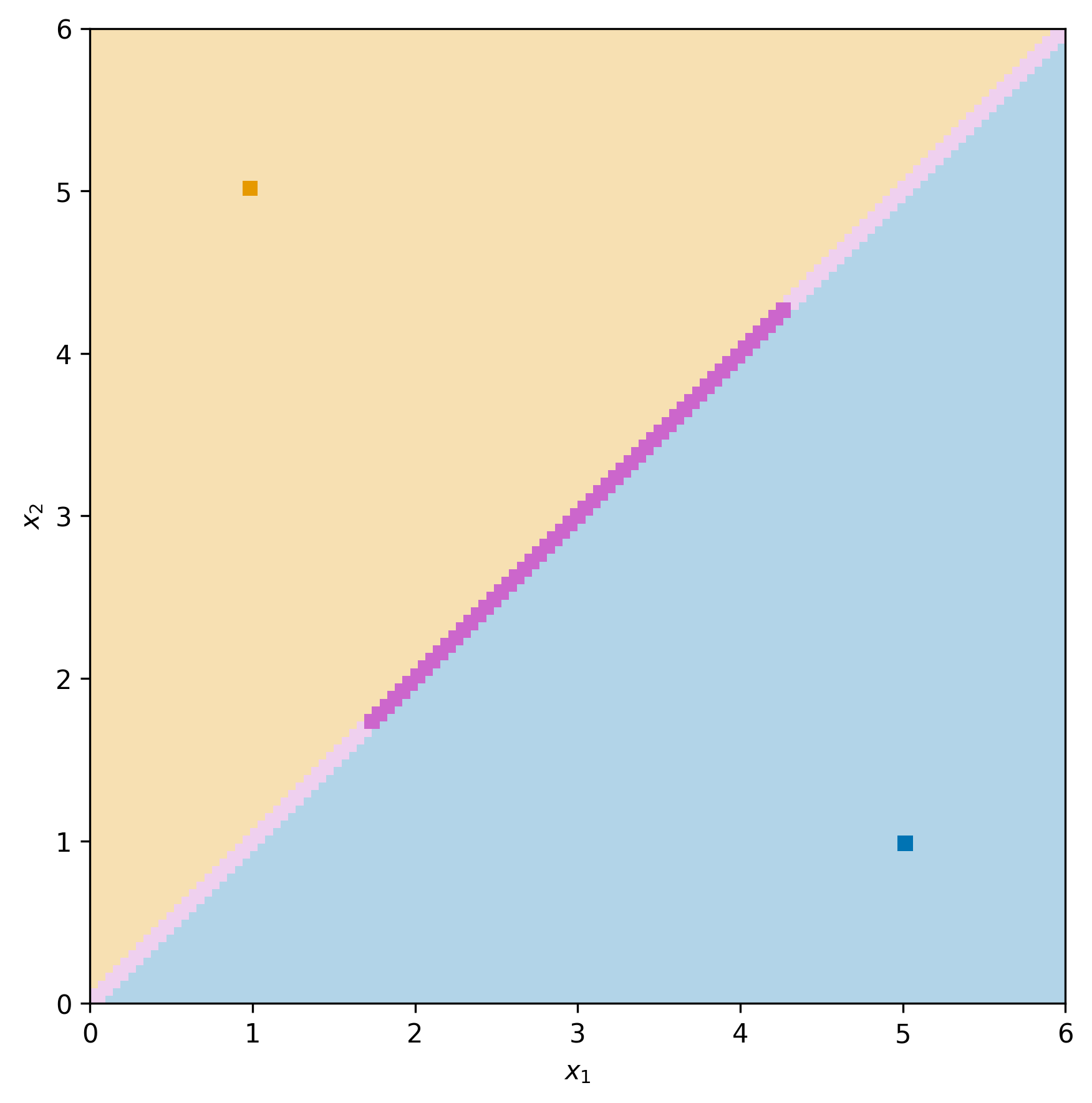} &
    \includegraphics[width=0.17\textwidth]{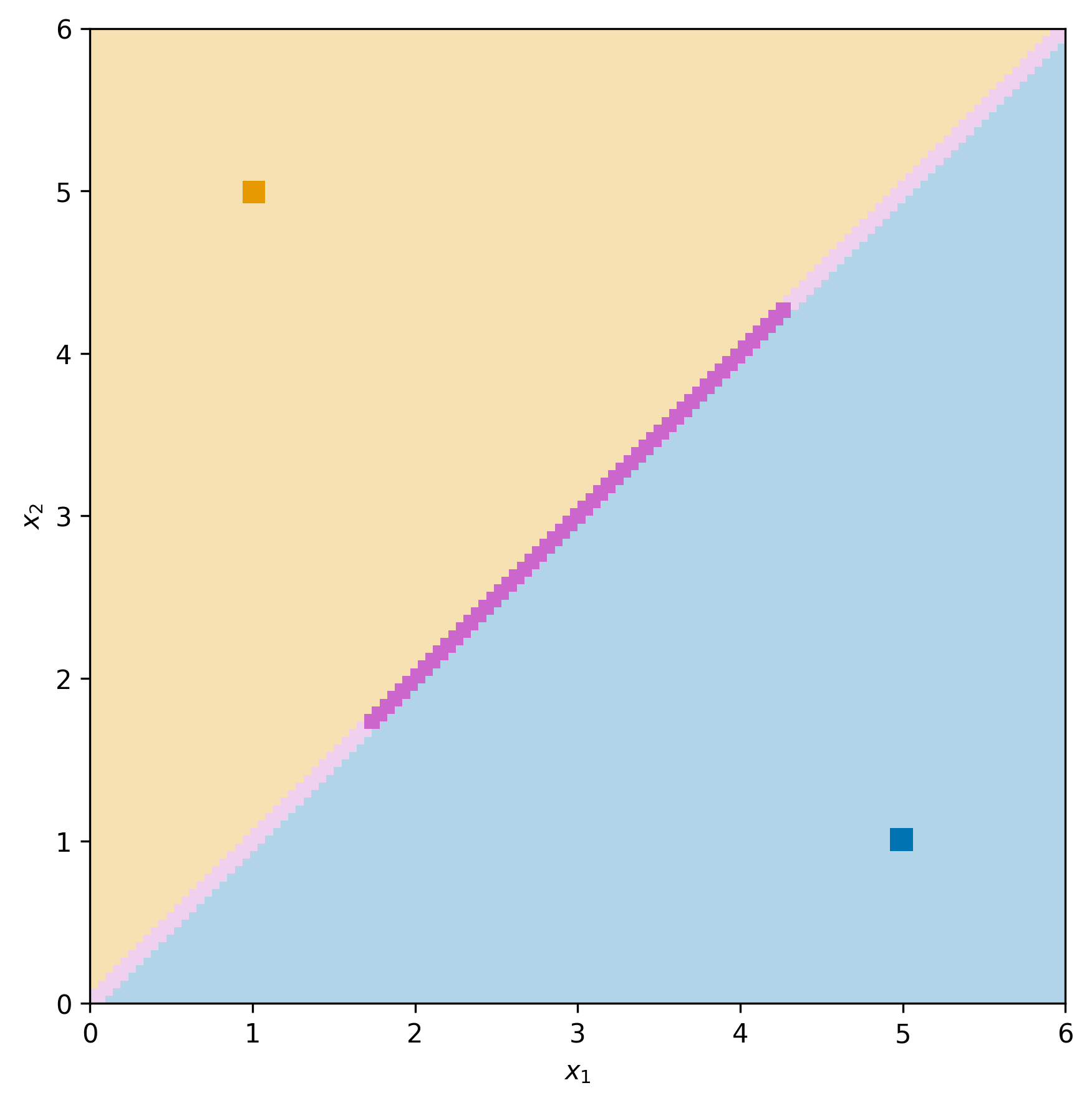} &
    \includegraphics[width=0.17\textwidth]{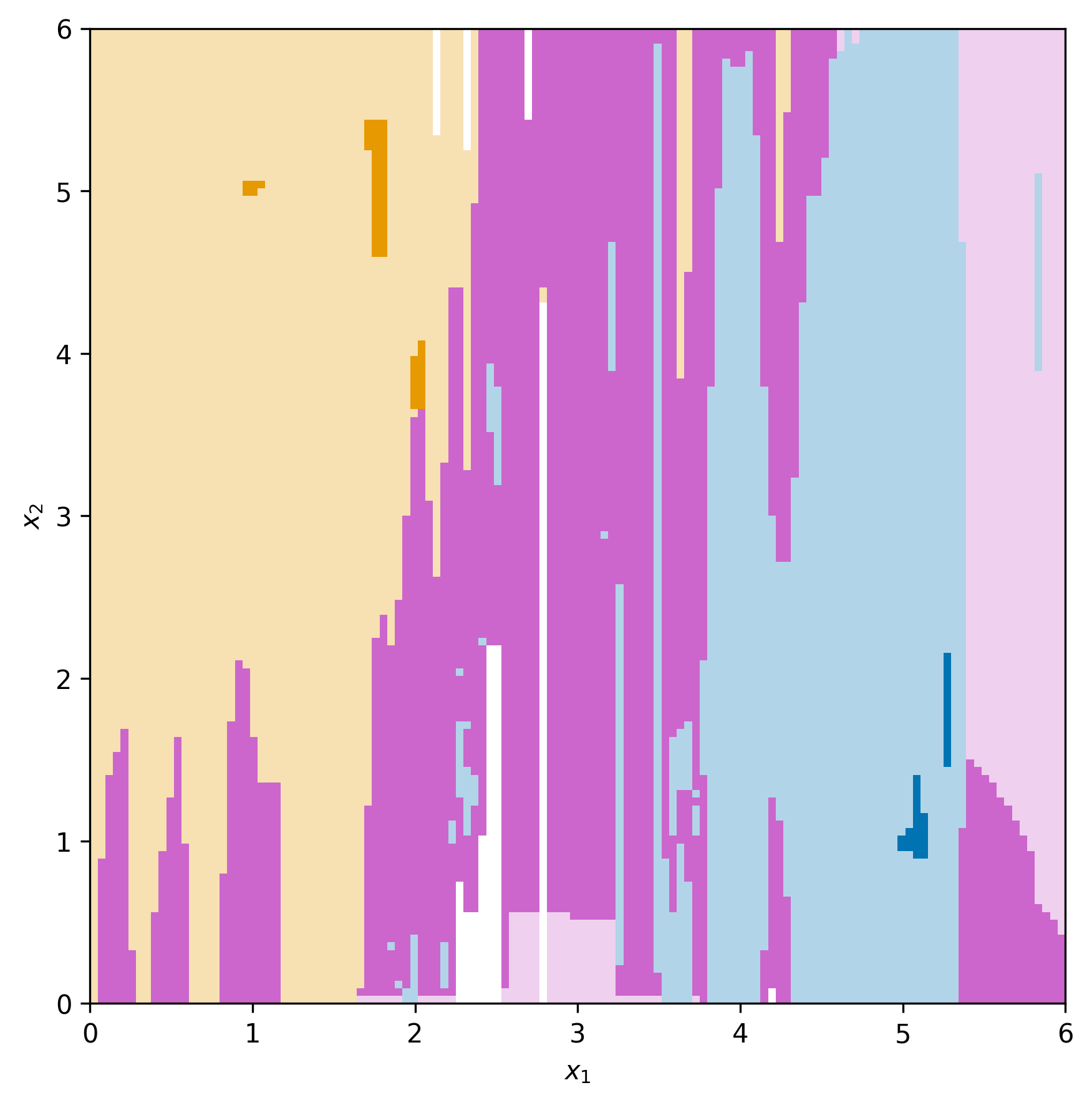} &
    \includegraphics[width=0.17\textwidth]{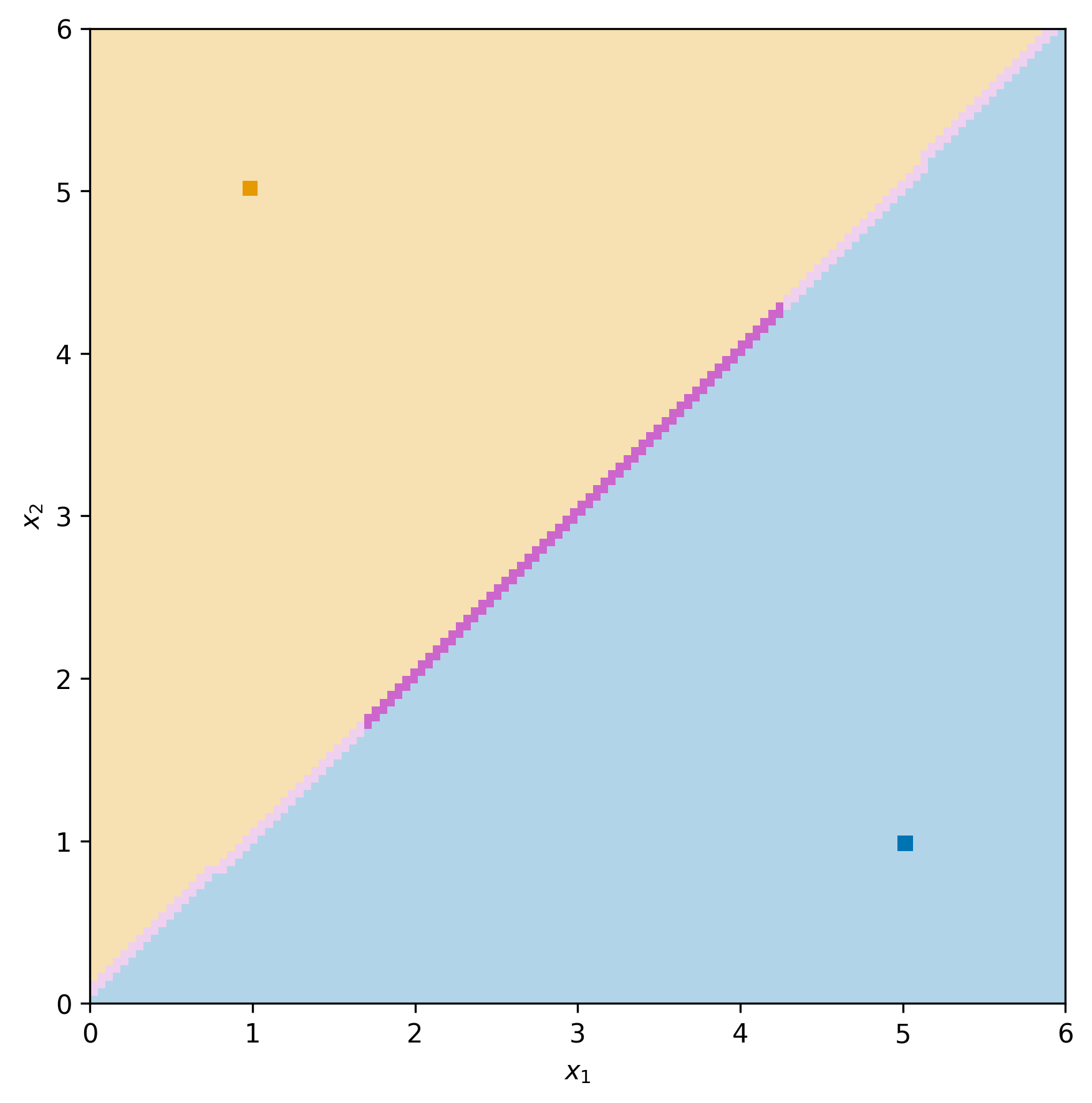} \\
    (a) Agg $\MG$ & (b) Ground-Truth & (c) Lipschitz & (d) Gaussian-Process & (e) Ours
  \end{tabular}
  \caption{Aggregated Morse graph (a) and geometric realization of Morse sets and regions of attraction.}
  \label{fig:toggle_combined}
\end{figure}

\begin{table}[ht]
  \centering
  \small
  \begin{tabular}{l|c|ccc|ccc}
    \hline
    Method & \# $\MG(\cF)$ & $M(1)$ & $M(2)$ & $M(3)$ & $\RoA(1)$ & $\RoA(2)$ & $\RoA(3)$ \\
    \hline
    Ground-Truth & 9 & 1.000 & 1.000 & 1.000 & 1.000 & 1.000 & 1.000 \\
    Lipschitz & 9 & 0.444 & 0.444 & \textbf{1.000} & \textbf{1.000} & \textbf{1.000} & \textbf{1.000} \\
    Gaussian-Process & 19 & 0.108 & 0.066 & 0.026 & 0.385 & 0.552 & 0.982 \\
    Ours & 9 & \textbf{1.000} & \textbf{1.000} & 0.663 & 0.980 & 0.996 & \textbf{1.000} \\
    \hline
  \end{tabular}
  
  \caption{Quantitative metrics for $\MG(\cF)$ and Aggregated Morse Sets and RoAs.}
  \label{tab:toggle_metrics}
\end{table}

The Lipschitz approach overestimates the recurrent structure associated to attractors compared to our method; however it reconstructs with high-fidelity the RoAs when compared with the reference method. This is expected given that the local estimate for the Lipschitz bound coincides with the global bound. The GP method produces 19 Morse sets, which differs significantly from the other computations. This fragmentation occurs because GPs assume smoothness while the system switches discontinuously at thresholds $T_1, T_2$. Standard kernels (RBF, Matérn) cannot capture switching behavior without excessive local variation, and confidence intervals expand near discontinuities, creating spurious sets. \citet{Ewerton2022GP} suggests a refinement step to sample from regions with large variation. Notably, the GP outer approximation performs better when $\tau >0$ is a smaller time step or the refinement step is performed. 

The topological dynamics reveals deeper structure. In terms of attracting blocks, we can see that $H_*(\RoA(3), \RoA(1) \cup \RoA(2)) \cong \tilde{H}_*(S^1)$ where $S^1$ is the circle. The index describes the algebraic topological information when collapsing the attracting block $\RoA(1) \cup \RoA(2)$, contained in $\RoA(3)$, to a single point, providing a homological description of the maximal invariant set contained in $M(3)$. In this case, it corresponds to the Conley index of a saddle-point in classical dynamics. Similarly, $CH_*(1)= CH_*(2)$ yield the Conley index of stable fixed points. Since the latter Morse sets lie entirely within regions of continuous dynamics, we may conclude they contain fixed points \citep{Handbook2002}.

\subsection{Piecewise Van der Pol: Limit Cycle}

We consider a Switching Polynomial System (SPS) inspired by the Van der Pol oscillator. Let $x = (x_1, x_2) \in \R^2$ and consider
\[
  \begin{aligned}
    \dot{x}_1 &= x_2 \\
    \dot{x}_2 &= (1-x_1^2)x_2 - g(x_1)
  \end{aligned} \quad \text{ where } \quad
  g(x_1) =
  \begin{cases}
    x_1^3 & |x_1| < 1 \\
    x_1 & |x_1| \geq 1
  \end{cases}
\]
The switching occurs at $|x_1| = 1$ when the force changes from cubic to linear. The system exhibits a stable limit cycle encircling an unstable equilibrium at the origin. Unlike the toggle switch, the vector field remains continuous at the switching boundary. This continuity and the nonlinear term fundamentally changes the performance of different approximation methods.

We generate $N = 50$ trajectories over $[0,10]$ with $\tau = 1.0$ and discretize $[-3,3]^2$ into $2^7 \times 2^7$ cubical cells. Figure~\ref{fig:vdp_combined} shows the aggregated results. Table~\ref{tab:vdp_metrics} summarizes the quantitative comparison. We learn $\hat{f}$ as a switching polynomial system with modes separated by $s(x) \approx -0.49x_1^2$.

\begin{figure}[ht]
    \centering
    \footnotesize
    \begin{tabular}{cc}
    \includegraphics[width=0.28\textwidth]{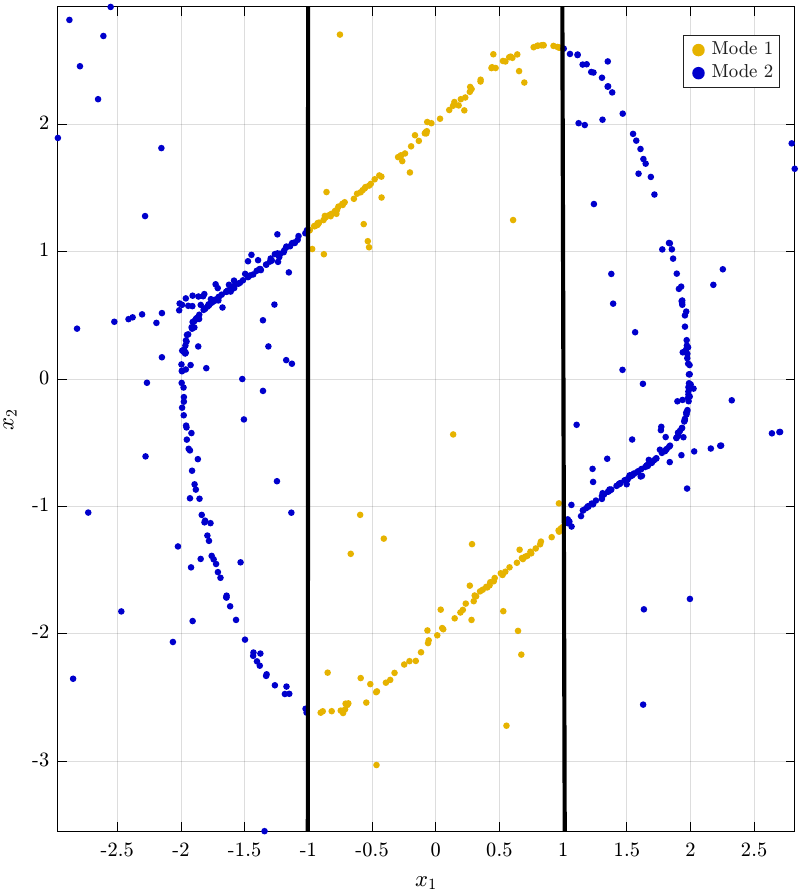}&
    \includegraphics[width=0.28\textwidth]{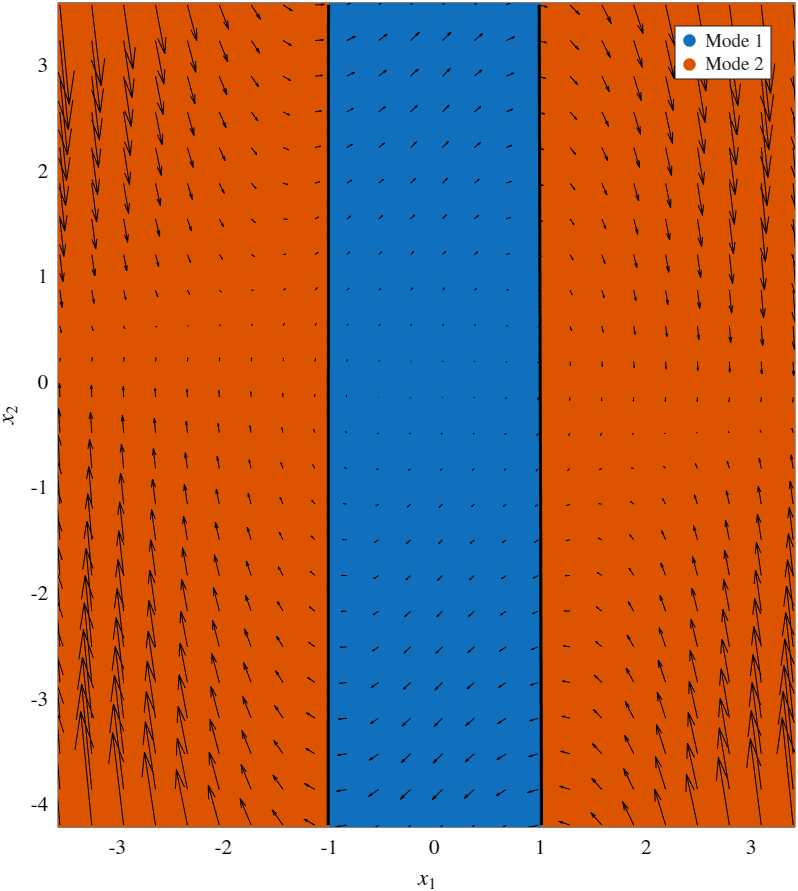}\\
    (a) Training samples & (b) Learned vector fields
    \end{tabular}
    \caption{Learned switching polynomial system colored by learned modes.}
    \label{fig:learned sps}
\end{figure}

\begin{figure}[ht]
  \centering
  \footnotesize
  \begin{tabular}{ccccc}
    \raisebox{0.3\height}{
      \begin{tikzpicture}[scale=0.9,
        morse/.style={circle, draw, line width=1.2pt, minimum size=20pt, font=\bfseries}]
        \node[morse, fill=orange!70!yellow!60!white] (n1) at (0,1.3) {1};
        \node[morse, fill=cyan!60!blue!90, text=white] (n0) at (0,0) {0};
        \draw[->, line width=1.2pt] (n1) -- (n0);
      \end{tikzpicture}
    } &
    \includegraphics[width=0.18\textwidth]{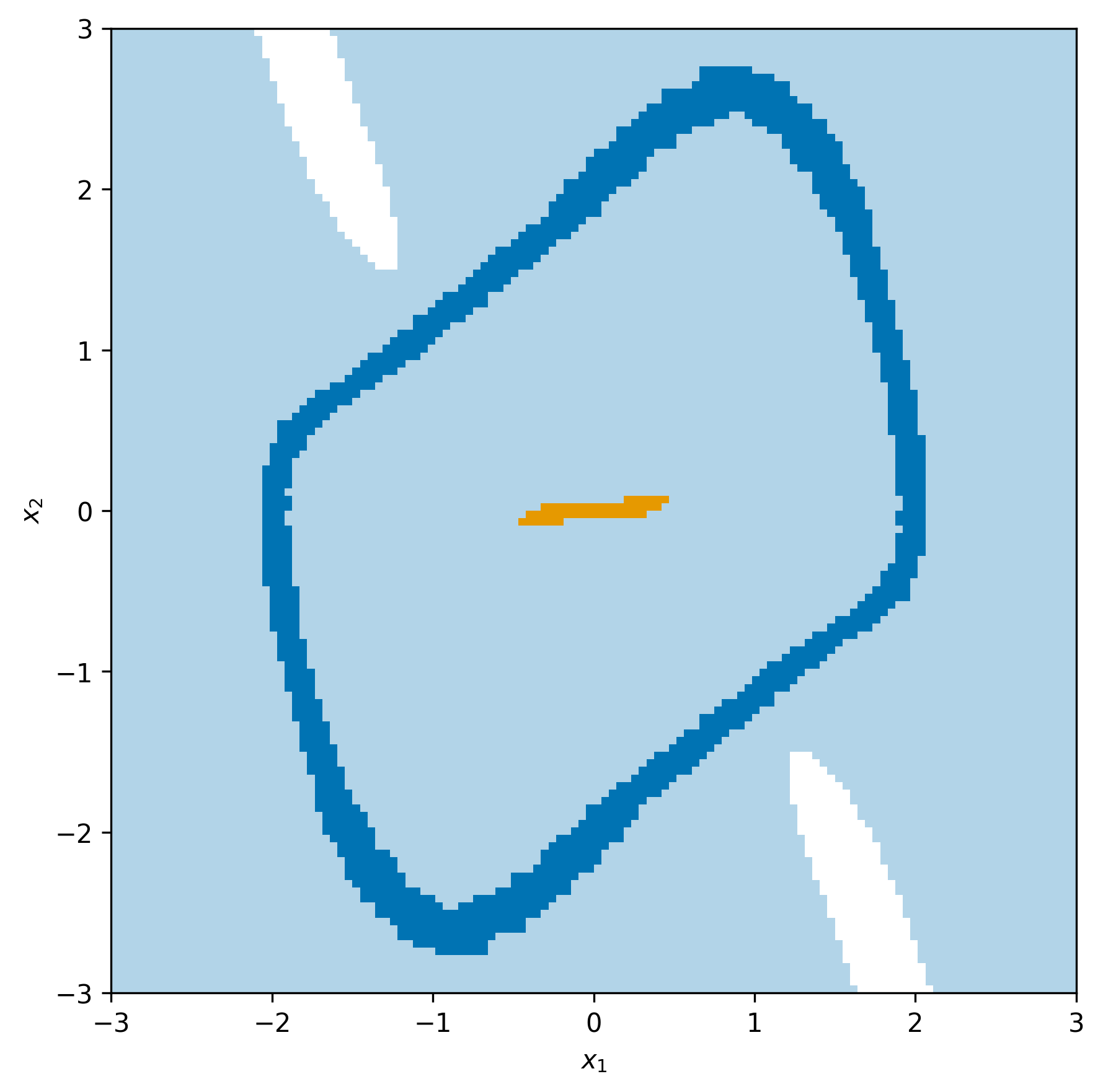} &
    \includegraphics[width=0.18\textwidth]{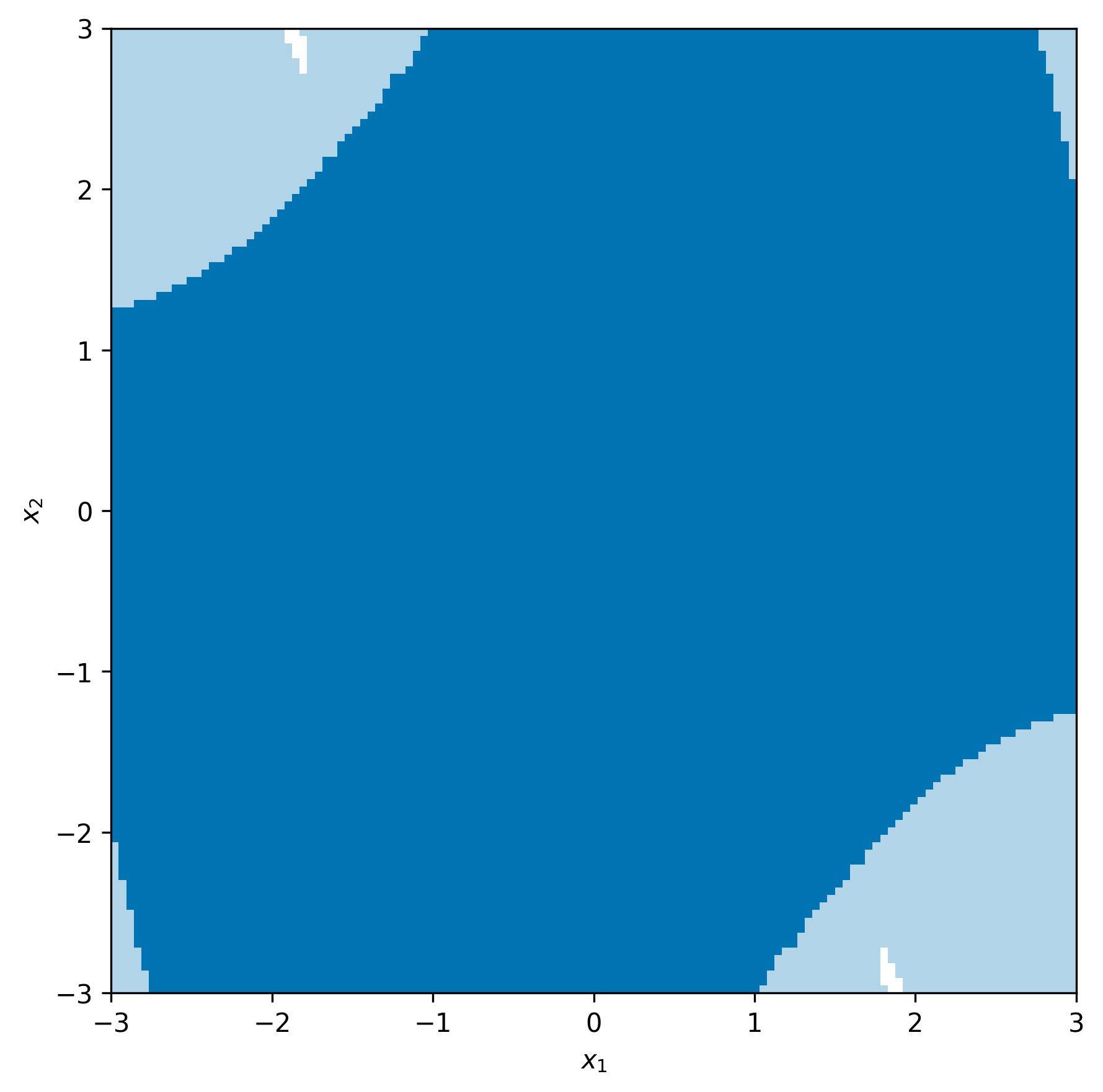} &
    \includegraphics[width=0.18\textwidth]{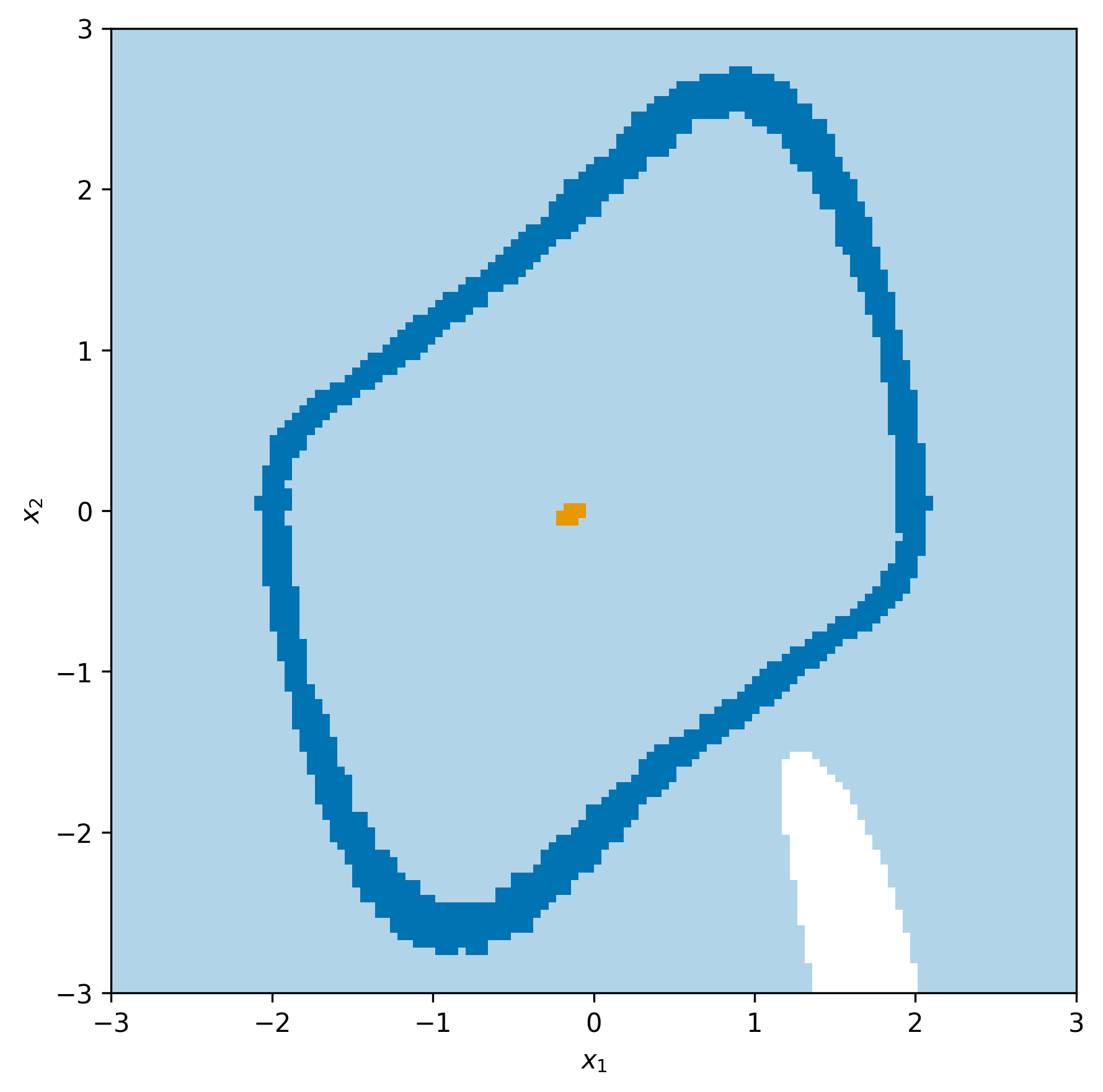} &
    \includegraphics[width=0.18\textwidth]{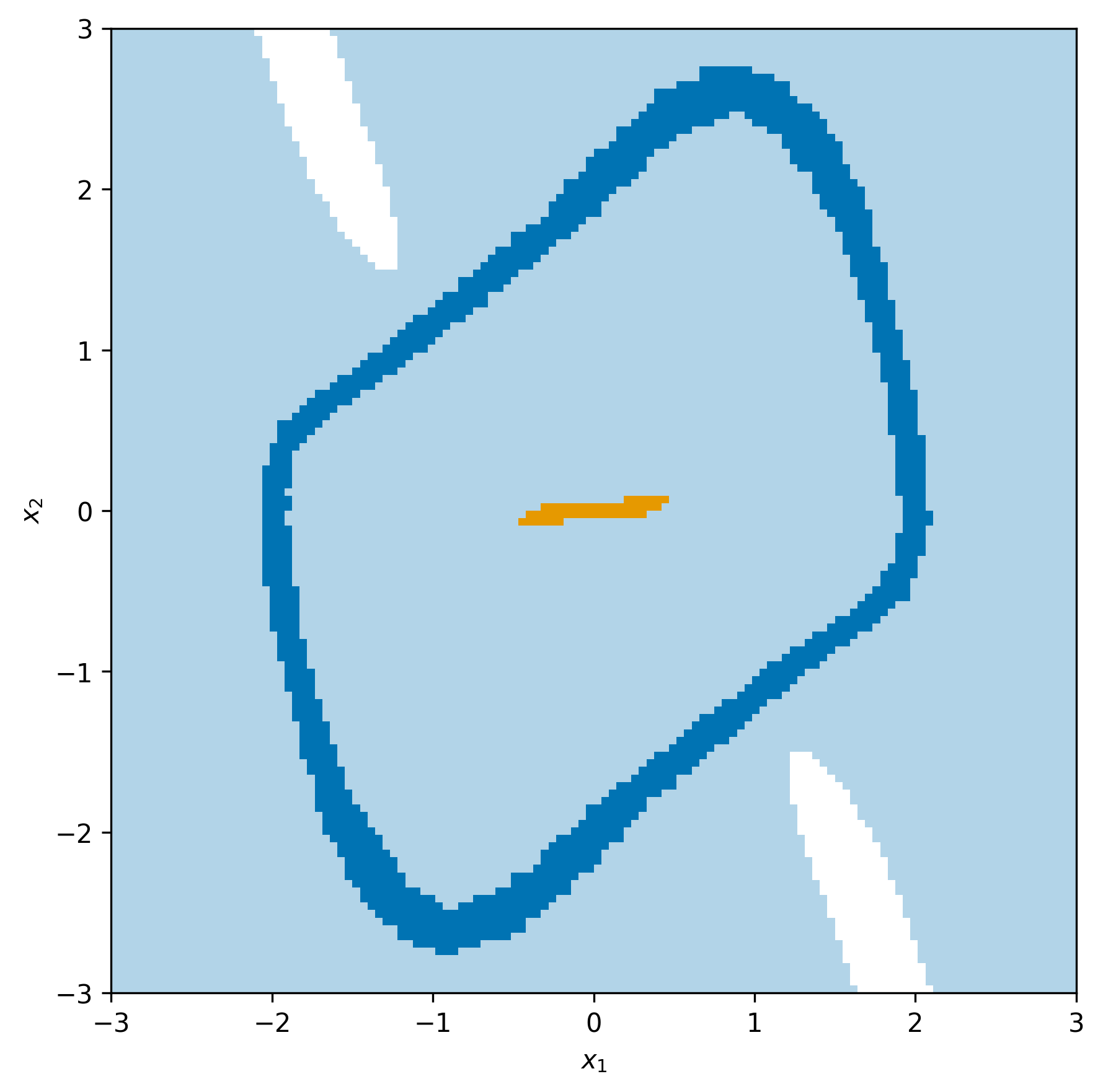} \\
    (a) Agg $\MG$ & (b) Ground-Truth & (c) Lipschitz & (d) Gaussian-Process & (e) Ours
  \end{tabular}
  \caption{Aggregated Morse graph and geometric realization of Morse sets and regions of attraction.}
  \label{fig:vdp_combined}
\end{figure}

\begin{table}[ht]
  \centering
  \small 
  \begin{tabular}{lcccc}
    \hline
    Method & \# $\MG(\cF)$ & $M(1)$ & $M(2)$ & $\RoA(1)$ \\
    \hline
    Ground-Truth & 4 & 1.000 & 1.000 & 1.000 \\
    Lipschitz & 1 & 0.094 & 0.000 & 0.964 \\
    Gaussian-Process & 2 & 0.932 & 0.174 & 0.973 \\
    Ours & 4 & \textbf{0.972} & \textbf{1.000} & \textbf{1.000} \\
    \hline
  \end{tabular}
    \caption{Quantitative metrics for $\MG(\cF)$ and Aggregated Morse Sets and RoAs.}
  \label{tab:vdp_metrics}
\end{table}

Our method identifies four Morse sets, the same number as the reference method. The GP method shows better results than the previous example since the continuity of the vector field allows standard kernels to approximate the flow more effectively. In contrast, the Lipschitz method obtains a single Morse set due to an overly conservative global bound. In other words, the nonlinearity causes the Lipschitz constant to be too large to discern the unstable behavior at the origin.

After aggregation, these correspond to two dynamical structures: an unstable fixed point (repeller) and a limit cycle (attractor). The Conley index in the planar case can be computed by inspection, and due to the existence of a Poincaré section, the index $CH_*(1)\cong H_*(S^1)$ leads to the existence of a periodic orbit in $M(1)$ \citep{McCord1995}. By collapsing $\RoA(1)$ to a point, one obtains the Conley index of an unstable fixed point in $M(2)$, namely, $CH(2)\cong \tilde{H}_*(S^2)$ where $S^2$ is the sphere.

\section{Conclusion}

We have presented a framework for topological dynamics by integrating switching system identification via convex optimization with combinatorial dynamics. The identified analytical model enables explicit construction of outer approximations, addressing a key limitation of data-driven approaches.

Our experimental results on planar systems show that the method recovers attractors and regions of attraction, achieving better results than Gaussian-Process and Lipschitz methods used in robotics applications \citep{Ewerton2022GP,Ewerton2022MG}. We restricted our experiments to low-dimensional systems as our prototype implementation prioritized interpretability over computational efficiency. However, faster methods to compute cubical homology and combinatorial dynamics could be used \citep{CHomP, CMGDB}.

Future work includes the extension to higher-dimensional systems through adaptive refinement and sparse grid methods, computation of the Conley index of the $\tau$-map via shift-equivalences, and exploration of hybrid systems in the control setting by identifying control modes.

\acks{Supported in part by AFOSR grant FA9550-23-1-0400 (MURI),  AFOSR
grant FA9550-32-1-0215, AFOSR grant FA9550-23-1-0011, NSF grant 2103026.}

\bibliography{l4dc2026-sample}

\end{document}